\newtheorem{theorem}{Theorem}
\newtheorem{lemma}{Lemma}
\newtheorem{corollary}{Corollary}
\newtheorem{assumption}{Assumption}
\newtheorem{definition}{Definition}
\newtheorem{remark}{Remark}
\newcommand{\norm}[1]{\left\|#1\right\|}
\newcommand{\abs}[1]{\left|#1\right|}
\newcommand{\OMIT}[1]{}
\newif\ifpdf
\begin{document}
\title{Distributed Adaptive Reinforcement Learning:\newline A Method for Optimal Routing}

\author{Salar Rahili, Benjamin Riviere, and Soon-Jo Chung\thanks{California Institute of Technology, Pasadena, CA, 91125}}

\maketitle


\begin{abstract}
	In this paper, a learning-based optimal transportation algorithm for autonomous taxis and ridesharing vehicles is presented. 
	The goal is to design a mechanism to solve the routing problem for multiple autonomous vehicles and multiple customers in order to maximize the transportation company's profit.
	As a result, each vehicle selects the customer whose request maximizes the company's profit in the long run.
	To solve this problem, the system is modeled as a Markov Decision Process (MDP) using past customers data.
	By solving the defined MDP, a centralized high-level planning recommendation is obtained, where this offline solution is used as an initial value for the real-time learning.
	Then, a distributed SARSA reinforcement learning algorithm is proposed to capture the model errors and the environment changes, such as variations in customer distributions in each area, traffic, and fares, thereby providing optimal routing policies in real-time.
	Vehicles, or agents, use only their local information and interaction, such as current passenger requests and estimates of neighbors' tasks and their optimal actions, to obtain the optimal policies in a distributed fashion.
	An optimal adaptive rate is introduced to make the distributed SARSA algorithm capable of adapting to changes in the environment and tracking the time-varying optimal policies.
	Furthermore, a game-theory-based task assignment algorithm is proposed, where each agent uses the optimal policies and their values from distributed SARSA to select its customer from the set of local available requests in a distributed manner.
	Finally, the customers data provided by the city of Chicago is used to validate the proposed algorithms.
 \end{abstract}

\section{ Introduction}	

Urban transportation plays a significant role in the development of modern cities. Almost 1.2 million deaths occur on roads each year worldwide, and reports show that $94\%$ of car accidents in the U.S. involve human errors \cite{Google}. Autonomous cars are an emergent technology that will quickly become ubiquitous as a safer and more efficient mode of transportation. Transportation Network Companies (TNCs) are planning to employ coordinated fleets of autonomous ground and air vehicles to improve the urban transportation capabilities \cite{Uber}, see Fig.~\ref{overview_graphic}. The deployment of fleets of autonomous vehicles, both ground and air, drives a coupled innovation in algorithm development. 


\begin{figure}
\begin{center}
\includegraphics[width=75mm]{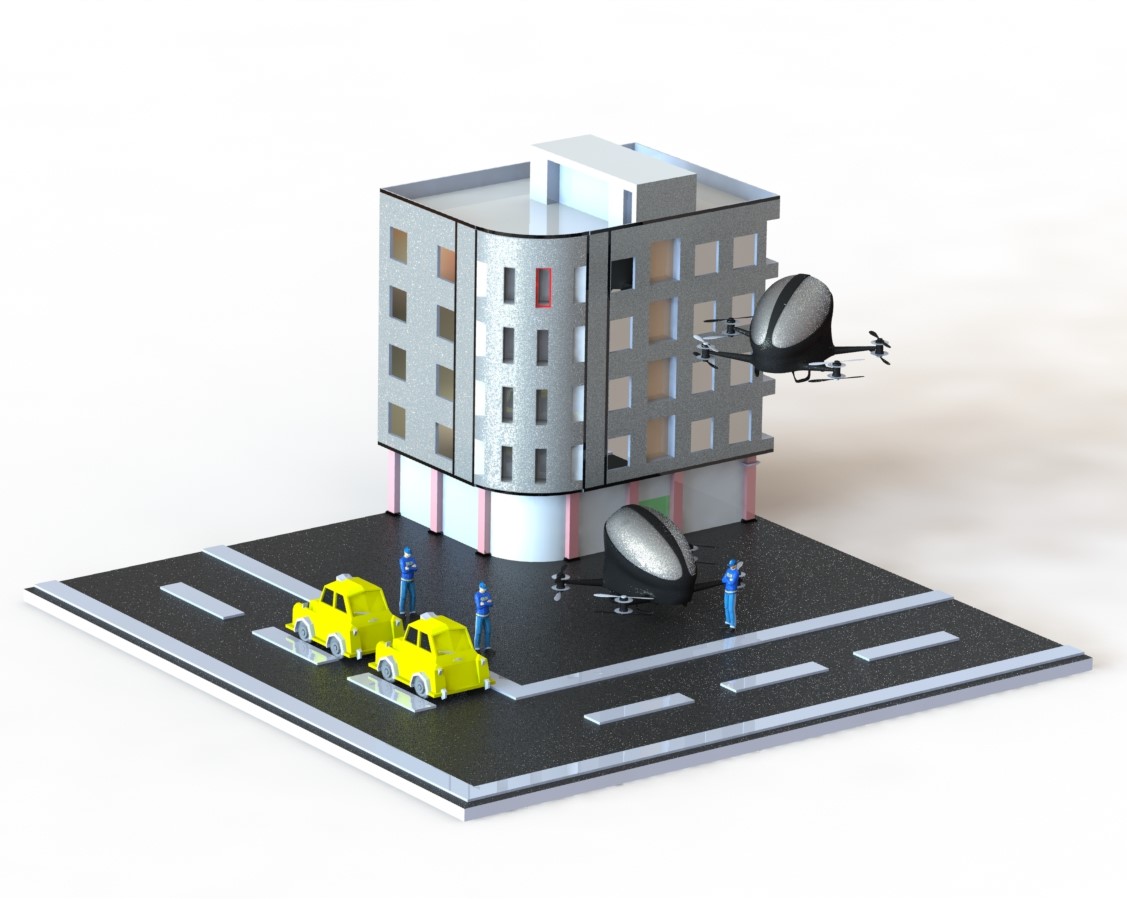}
\caption{
Concept graphic of an intelligent transportation network. Agents, both ground and air vehicles, estimate in real-time the state of the environment and select optimal customers to maximize the transportation company's profit.
}\label{overview_graphic}
\end{center}
\end{figure}

Planning such an on-demand transportation system that must adapt to customer needs in real-time has been studied in the literature. The problem of providing transportation services for customers can be modeled as a Pick-up and Delivery Problem (PDP) \cite{StaticSur} or its extension Dial-A-Ride Problem (DARP) \cite{DARP} in which the transportation of goods is replaced by the transportation of people. Most prior work in the literature is focused on a static routing problem, where all the customers' requests for all time are known before routes are determined. However, due to the fast dynamics of customers requests and unknown future requests, employing these methods for planning real-time transportation in urban areas is not possible. Recently, some new research has been conducted on dynamic and stochastic routing using PDPs, where part or all of their input is unknown and revealed dynamically \cite{DynHv,DynFra,DynamicSur,dynamic_congestion}. The main objective of these studies is to minimize the total distance traveled by the vehicle while servicing all customers. 
In \cite{optimal_routing}, a Markov Decision Process formulation optimizes cost and vehicle usage in a dynamic environment. However, the solutions in the literature are mostly addressed by proposing centralized methods. In \cite{multi}, a centralized algorithm for routing problem is introduced, where the computation cost limits its ability to solve the problem for only $5$ vehicles and $17-25$ customers.

Recent works propose scalable solutions to dynamic routing problems.
In \cite{anticipatory_routing}, a decentralized solution is presented to minimize traffic congestion using an ant-pheromone-inspired method.
In \cite{deep_rl}, a distributed deep reinforcement-learning method is proposed to learn macro-actions in event-response, useful for dynamic routing. A multi-agent MPC-based control is introduced in \cite{cyber_car}, where a fleet of autonomous taxis is controlled in a scalable manner to minimize total energy cost. In contrast to these frameworks, we proposed a distributed reinforcement learning algorithm with learning-rate adaption and a dynamic consensus algorithm to control fleets of taxis to maximize company profit by optimally selecting customers while only a limited amount of information must be communicated among local neighbors. 
 A limitation of prior routing studies is that the capacity of the vehicle is limited to only one customer; hence, ride-pooling capability is not considered. In ride-pooling, two or more customers can be matched to get service simultaneously by one vehicle. This can significantly reduce the cost for customers and also reduce the number of required vehicles for the transportation company. Existing work in ride-sharing from \cite{intelligent_carpool} mines GPS data to find a set of frequent routes to intelligently propose ride-sharing routes in real-time. We propose a game-theoretic ride-sharing extension in our task assignment.

\begin{figure*}[t]
\begin{center} \hspace*{0cm}
\vspace{0.1cm} {\scalebox{0.5}{\includegraphics*{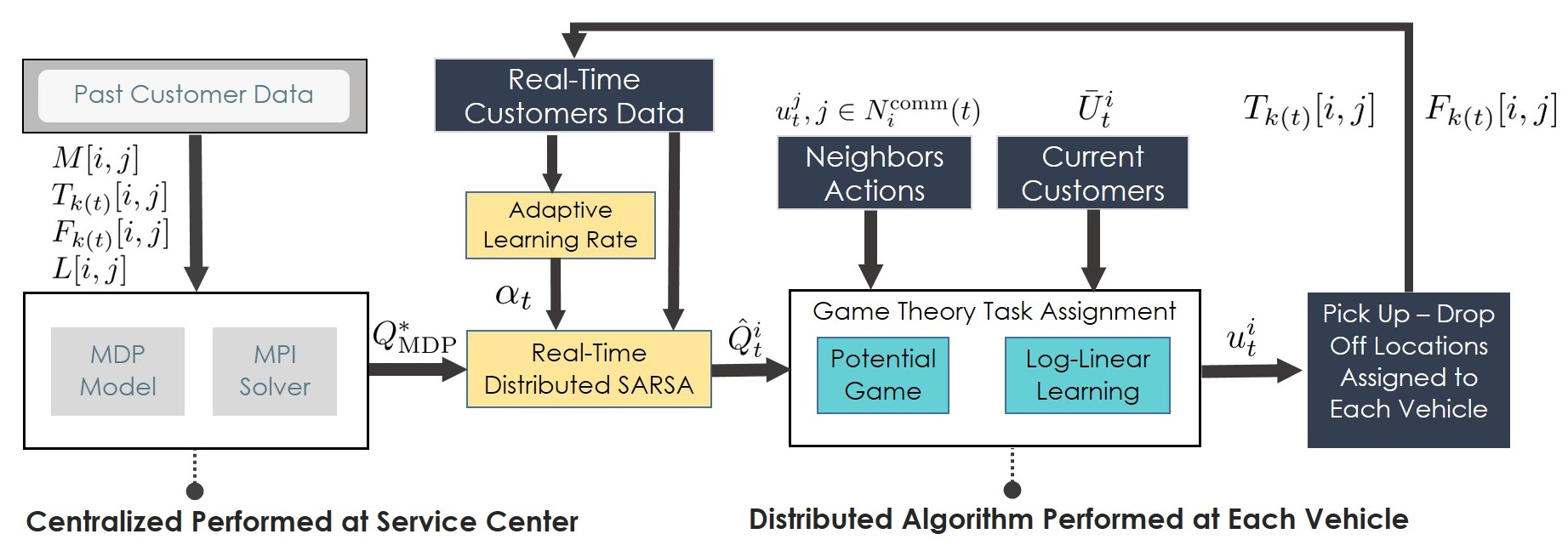}}}
\vspace{0cm} \caption{Schematic of the proposed optimal routing algorithm.}
\vspace{-0.4cm}
\label{Schematic}\end{center}
\end{figure*}

In this paper, we present a distributed learning-based optimal traffic planning and decision making algorithm that integrates planning with a local decision making algorithm. The proposed scheme performs in a distributed fashion based on local information. Such local information includes other neighboring vehicles' route tasks, their estimate of the optimal actions, and current passenger requests. Here, our goal is to design some mechanisms to find the optimal actions for these two kinds of autonomous transport vehicles. We propose a unified strategy to solve the routing problem to maximize the transportation company's profit. To attain this goal, each vehicle selects the best customer among current requests in order to maximize the company's profit in the long run. The proposed algorithm is shown in Fig.~\ref{Schematic}. The research directions proposed in review study \cite{its_review} is well aligned with our algorithm to coordinate mobile agents in a distributed fashion to handle uncertainty in a dynamic environment.

In Sec. \ref{SubSec:MDP-taxi}, the problem is modeled as a Markov Decision Process (MDP). The past customers data is used to predict the probability of having customers and their trips in each area. Solving the MDP provides the agents a high-level planning recommendation, including a list of state transitions and their potential values. This is shown in gray boxes of Fig.~\ref{Schematic}. However, the static solution built only based on past data is not accurate, as it is not able to capture any changes in the environment. Reinforcement learning can be used as a decision making scheme when an accurate model of the environment is not known. 

In Sec.~\ref{subsec:SARSACentralized}, a State-Action-Reward-State-Action (SARSA) reinforcement learning algorithm is presented, which allows the system to learn its model (i.e., transition probabilities and rewards) and update the optimal policies while the optimal policies obtained from the MDP solution are used as an initial value. The conventional reinforcement learning usually is not well-suited for a non-stationary environment, and the commonly-used proofs of convergence hold only for stationary environments. In our problem setting, the environment is non-stationary. In particular, the number of the customers in each area, traffic and fares are changing over time. 

In Sec.~\ref{SubSec:adaptive}, we propose an optimal adaptive learning-rate tuning, and modify the SARSA reinforcement learning in order to track the environment changes in non-stationary environments. Moreover, the SARSA algorithm is a centralized algorithm, where all information is required to be sent to a central node to be fused. However, in our framework with many vehicles and trips in each step, it is not feasible to pass all information to and from a command center. 

Hence, a fully distributed SARSA reinforcement learning is proposed in Sec. \ref{SubSec:Dis-Learning}, where agents are only using their own local information and local interactions to update the optimal policies of the system. The proposed modified distributed SARSA provides the value of each action in the environment. Each vehicle uses these values to evaluate each customer. In a single agent scenario, the agent simply selects the customer with the largest value. However, in a multi-vehicle scenario, agents are required to reach an agreement on selected customers in order to avoid any conflict. To solve this problem, agents need to agree on how to distribute the customers among themselves. In Sec.~\ref{Sec:game}, a real-time task assignment algorithm based on game theory is proposed to enable the agents to select their non-conflicting tasks/customers in a distributed fashion. In Sec.~\ref{SubSec:ridesharing}, we focus on ride sharing and courier taxi service routing. As compared to our preliminary work presented in an eight-page-long workshop article~\cite{priorWork}, this paper include many revision in all the sections, including two additional mathematically-rigorous proofs of convergence, more complete proofs of the theorems, and an appendix detailing some math used in the main proofs.

\section{Preliminaries and Definitions} \label{Sec:Prelim}
\subsection{Notation}
The following notations are adopted throughout this paper. 
	$x \in \mathbb{R}^n$ denotes the vector of size $n$. Let $\textbf{1}_n$ and $\textbf{0}_n$ denote the column vectors of $n$ ones and zeros, respectively. 
	The Cartesian product of sets, $S_i, \forall i$ is denoted by $\prod_{i=1}^N S_i$.
	The empty set is denoted by $\varnothing$.
	The cardinality of a set $S$ is denoted by $|S|$.
 	$\mathbb{E}[\cdot]$ is the expectation operator.
    $\mathbb{V}\text{ar}$ is the variance of a random variable. 
	The arithmetic mean of a series of numbers, $a_i$, for $i=1,\cdots,n$, is denoted by $\text{Avg}(a_i)=\frac{1}{n}\sum_{i=1}^{i=n} a_i$.
	The exponential function is written as $\exp(\cdot)$.
	A time-varying digraph $\mathcal{G}_t \triangleq (V,E_t)$ is used to characterize the interaction topology among the agents, where ${V \triangleq \{1,\ldots,N\}}$ is the node set and $E_t \subseteq V \times V$ is the edge set. An edge $(i,j) \in E_t$ means that node $i$ can obtain information from node $j$ at time $t$.  The adjacency matrix ${\mathcal{A}(t) \triangleq [\mathcal{A}_{ik}(t)] \in \mathbb{R}^{N \times N}}$ of the graph $\mathcal{G}_t$ is defined such that the edge weight ${\mathcal{A}_{ik}\neq 0}$ if ${(j,i) \in E_t}$ and ${\mathcal{A}_{ik}=0}$ otherwise.
	The compact two-dimensional space (i.e., city map) is partitioned into $n_q \in \mathbb{N}$ disjoint partitions. The size of the cells is selected by the designer based on the desired spatial resolution and the computation expenses. The superscript is an agent index, and the subscript is a time index. 

\subsection{Markov Decision Process Formulation} \label{SubSec:MDP-taxi}
MDP is a mathematical framework introduced to make decisions in a stochastic, known environment (see \cite{Puterman,bellman1957} and references therein), and the solution of MDP is a policy providing all optimal actions in each state of the environment.
In our problem setting, the current customers at each cell are time-varying and not known a priori. Hence, MDP cannot be used as an actor to adapt itself quickly and obtain an optimal policy for current possible actions. However, based on the stochastic model embedded in MDP, we are able to evaluate the profit gained by selecting each  possible action in the long run. Thus, we use MDP to estimate the value of each possible action on each state, instead of an actor to select the optimal policies directly. Then, an agent can easily use the provided estimation from MDP and select the optimal current customer.
		
In our problem framework, at each time instant each vehicle knows the requests of the current local customers and, by getting help from our decision making algorithm, can deterministically select a customer among available customers. However, the future customers requests at the destination are unknown. We first propose an MDP to model the problem. Then solving the defined MDP problem provides a high-level policy making recommendation for each agent. These recommendations include a list of ranked possible actions for each cell (i.e., vehicle's current cell), and a value corresponding to the expected infinite horizon average payoff for each action. Here, we iteratively modify our model and solve the MDP for each state. In particular, in an inner-loop $1\leq l \leq n_q$, it is assumed that the agent is in cell $l$, and aware of its local current customers requests, while  the set of possible actions in future time instants are not known.
\begin{definition}\label{reward}
An agent earns $F_{k(t)}[i,j]$ fare for task $k$ at time $t$, where the task is completed by picking up and dropping off the customer from cell $i$ to cell $j$, respectively. The reward is denoted by $D_{k(t)}[i,j]$, and can be calculated as $D_{k(t)}[i,j]=\frac{M[i,j]}{T_{k(t)}[i,j]}F_{k(t)}[i,j]$, where $M[l,j]$ corresponds to any motion constraints to go from cell $l$ to cell $j$, and $T[l,j]$ corresponds to time to go from cell $l$ to cell $j$. It is assumed that agents are not moving if they are not assigned to a  customer; hence the reward for an agent is assumed to be zero if it does not have a customer. An agent might remain in its current cell in two cases: 1) not having any customer at time $t$, which means $k(t)=$ \O; hence, $D_{k(t)}[i,i]=0$, or 2) having a customer such that the pick up and delivery points are both inside cell $i$, then the reward is $D_{k(t)}(i,i)\neq 0$. To accommodate this effect, we denote the average of rewards for all similar tasks as $D[i,j]=\text{Avg}(D_{k(t)}[i,j]), \forall i,j$. Note that $D[i,i]$ contains the average of both zero and nonzero rewards. 
\end{definition}
\hspace{0.25cm} Our MDP is formulated with a tuple, $<S,A,\mathcal{P},\mathcal{R}>$ as follows:
\begin{itemize}
\item State variables $S$:  The finite set of zones or cells in the city, denoted by $S=\{i \mid \forall i \in 1, 2, \cdots, n_q\}$. 
\item Actions $A$: The set of possible actions at cell state $l$ is  $A(l)=\{ a^l_j\}$, where $a^l_j$ is the action of moving into cell $j$ from cell $l$. 
\item Reward  model $\mathcal{R}$: 
\begin{align*}
\mathcal{R}_{a^i_j}(i,j) = 
\begin{cases} 
   D[i,j] & \forall i, i \neq j \\
   \frac{L[i,i] D[i,j]}{1 + L[i,i] - \sum_k L[i,k]} & i = j\\ 
\end{cases}
\end{align*} where $D[i,j]$ is the average reward defined in Definition \ref{reward}, and $L[i,j]$ is the probability of having a customer to pick up from cell $i$ and deliver to cell $j$. 
\item Transition probabilities $\mathcal{P}$:  
\begin{align*}
\mathcal{P}_{a^i_j}(i,j)=
\begin{cases} 
   L[i,j] & i\neq j \\
   1 + L[i,i] - \sum_j L[i,j] & i = j\\ 
\end{cases}
\end{align*} where $L[i,j]$ is the probability of having a customer to pick up from cell $i$ and deliver to cell $j$.
\end{itemize}

 {\color{black} To ensure that the defined model is a proper MDP, we show that $\sum_{k=1}^{k=n_q} \mathcal{P}_{a^l_{j}}(l,k)=\sum_{k=1}^{k=n_q} \mathcal{P}_{\bar{a}^i}(i,k)=1$, for  any inner-loop $l$, and any action.
 \begin{itemize}
 \item[I)]Assume that the current state is $l$ and we have full action set of $A(l)$. The algorithm deterministically chooses an action $\mathcal{P}_{a^l_{j}}$; hence, $\sum_{k=1}^{k=n_q} \mathcal{P}_{a^l_{j}}(l,k)= \mathcal{P}_{a^l_{j}}(l,j)+ \sum_{k \neq j} \mathcal{P}_{a^l_{j}}(l,k) =1+0=1.$

 \item[II)]  Now, assume that we are in the $l$th loop but currently in a state $i\neq l$, where we have only one action $\bar{a}^i$, then $\sum_{k=1}^{k=n_q} \mathcal{P}_{\bar{a}^i}(i,k)= \sum_{k=1,k \neq i}^{k=n_q} \mathcal{P}_{\bar{a}^i}(i,k)+\mathcal{P}_{\bar{a}^i}(i,i) = \sum_{k=1,k \neq i}^{k=n_q} L[i,k]+1+L[i,i]- \sum_{k=1}^{k=n_q} L[i,k]=1.$
 \end{itemize}

We formulate the maximum reward problem with a Q-value for a state-action pair, $Q(i,\pi[i])$, a value function $V(i)$, and a policy $\pi[i]$, defined with terms from the MDP tuple. This is a dynamic programming problem, where $\mathcal{R}(i,j)$ is the immediate reward to go from cell $i$ to cell $j$, $V(j)$ is all the future reward if actions are chosen optimally from cell $j$, and $\gamma$ is the discount factor that penalizes future rewards exponentially. 
\begin{align}
Q(i,{\pi[i]})&= \sum_{j\in n_q} \mathcal{P}_{\pi[i]}(i,j) \big(\mathcal{R}_{\pi[i]}(i,j)+\gamma V(j)\big),\label{Q-value}\\
V(j)&= \max_{\pi[j] \in A(j)}Q(j,{\pi[j]}) , \label{V}\\
\pi^*[i]&=\arg \max_{{a^i_j} \in A(i)} Q(i,{{a^i_j}}), \label{maximization}
\end{align}
A solution to \eqref{maximization} is an optimal policy, denoted by $\pi^*[i]$, defined by the Bellman equation. Note that this optimal policy can be obtained knowing the Q-value, $Q(.)$, for each state-action pair. To find the solution for \eqref{maximization}, we use a Modified Policy Iteration (MPI) algorithm to estimate \eqref{V} through several steps of successive approximation. The optimal solution of the MDP problem is aggregated as a vector denoted by $Q^*_{\text{MDP}}$.

Here, our goal is to solve the MDP problem while we keep the value of Q-function $Q(l,\cdot)$ in our memory for each available action at cell $l$.
	The optimal solution of the MDP problem is aggregated as a vector denoted by $Q^*_{\text{MDP}}\in\mathbb{R}^{\sum_{k=1}^{n_q} |A(k)|}$.
	Note that in our framework the optimal policy $\pi[i]$ calculated in \eqref{maximization} is not necessarily available for agent at time $t$.
	Each vehicle can only select a customer among the set of pick-up requests available at time $t$.
	Hence, we not only keep the stationary optimal policy $\pi[l]$ in our memory, but we will also save a list of all available actions at each cell $l$ ranked based on the value of Q-function.

\section{Distributed SARSA Reinforcement Learning with Adaptive Learning Rate Tuning} \label{Sec:SARSA}
    To account for the changing environment, we begin with the near-optimal $Q^*_\text{MDP}$ and update the optimal policy on-line at each time step using new data from customers and agents' trips. The error of the MDP solution from the dynamic probability and reward models is shown in Appendix \ref{Appendix-errorbound}, which demonstrates the need to complement the MDP solution with learning. State-Action-Reward Reinforcement Learning (SARSA RL) is used because it can obtain an optimal policy when the system's model (i.e., $\mathcal{P}$ and $\mathcal{R}$) is not known in advance. First, we present a centralized SARSA RL algorithm in Sec. \ref{subsec:SARSACentralized}. Then, we present an optimal adaptive learning rate in Sec. \ref{SubSec:adaptive}. Finally, we present a novel distributed SARSA RL algorithm with a proof of convergence in Sec.~\ref{SubSec:adaptive}. 
    
\subsection{Centralized SARSA RL for Stationary Environment} \label{subsec:SARSACentralized}
First, we present a standard, centralized model-free reinforcement learning as a contextual comparison for our main contribution of deriving distributed algorithms. The Q-values with respect to state-action pairs are updated in a SARSA RL framework as:
\begin{align}
Q_{t+1}(i,\pi[i])&= (1-\alpha_t)Q_t(i,\pi[i])\nonumber \\&+\alpha_t \big(\mathcal{R}_{\pi[i]}(i,j)+\gamma Q_t(j,\pi[j])\big),
 \label{Q-update}
\end{align}
where $\alpha_t$ is a learning rate satisfying Remark \ref{alpha}, and $Q(\cdot)$ is updated under policy $\pi[i]$ to transition from cell $i$ to cell $j$. This formulation means that the action $\pi[j]$ at the successor state $j$ is not necessarily optimal, while in Q-learning, the successor action is chosen to be optimal.
\begin{remark}
Assume we are accomplishing the $k^\text{th}$ task at time $t$ by selecting action $\pi[i]$, where this task is completed by picking up a customer from cell $i$, and dropping him/her off at cell $j$. Then, the reward function is defined as $\mathcal{R}_{\pi[i]}(i,j)=D_{k(t)}[i,j]$. It can be seen that the reward function for SARSA in each time $t$ will be coherent with its average defined in Definition \ref{reward}, and used in MDP model in Sec. \ref{SubSec:MDP-taxi}.
\end{remark}
\begin{remark}\label{alpha}
In conventional reinforcement learning, the sequence of otherwise arbitrary $\alpha_t$ satisfies: $\sum_{T=0}^{\infty} \alpha_t= \infty$ and $\sum_{T=0}^{\infty} \alpha_t^2< \infty$. The Q-values eventually converge to a constant as the update term goes to zero. 
\end{remark} 
\hspace{0.25cm} However, in a non-stationary environment, we want the adaptive learning rate to not converge to zero, such that $Q_t(.)$ value can continue being updated in \eqref{Q-update}. In the next section, we will introduce a method to estimate the optimal adaptive learning rate dynamic signal.
    
\subsection{Adaptive Learning-Rate for Non-stationary Environment} \label{SubSec:adaptive}
In this subsection, a new algorithm (shown in yellow color in Fig.~\ref{Schematic}) is presented in Theorem \ref{Theorem-adaptive-alpha}, to estimate the optimal learning rate, $\alpha_t(i,{\pi[i]})$, at each time $t$ and for each action-state pair. Estimating the optimal policy is equivalent to converging the estimated Q-value to the Q-value at the next time-step. For each new sample data from a new customer with state-action pair $(i,{\pi[i]})$ at time $t$, the Q-update, denoted $v_t(i,\pi[i])$, and its stochastic information can be written as:
\begin{align}
    v_t(i,{\pi[i]}) &= \mathcal{R}_{\pi[i]}(i,j)+\gamma Q_t(j,{\pi[j]}), \label{update_randomness_eqn}\\ 
    \mathbb{E}(v_t) &= Q_t^*(i,{\pi[i]}), \ \ \mathbb{V}\text{ar}(v_t) = \sigma_t^2(i,{\pi[i]}), \nonumber
\end{align}    
Note that $Q_t^*(i,{\pi[i]})$ is the equilibrium solution of \eqref{Q-update}, which can be obtained by computing the expectation of the sample, $v_t$.

Here, we make the following assumption on variance of update for a state-action pair, $\sigma_{t}$.
\begin{assumption} \label{sigma-timeinvariant}
The observations' covariance of each state-action pair is assumed to be time-invariant, i.e., ${\sigma_{t+1}}(i,{\pi[i]})={\sigma_t}(i,{\pi[i]}), \forall t$, and it will be written as ${\sigma}(i,{\pi[i]})$.
\end{assumption} 

The mean and variance of the observation, $v_t$, are simply functions of probabilistic distribution of $\mathcal{R}_{\pi[i]}(i,j)$. In our problem setting, the reward model, $\mathcal{R}_{\pi[i]}(i,j)$ is determined by the ratio of fare, ${F[i,j]}$, and time to complete request, $T[i,j]$. Note that the mean of the reward function can vary according to the market rate for a taxi ride or change in traffic. However, the variance of this function is assumed to be time-invariant. The variance of fare is constrained by customer behavior to refuse above market rate fare and it is assumed that the variance of time to complete a task is constant.

Thus, we define the loss function, $\mathcal{L}(.)$ and the expected value of the loss function as follows: 
\begin{align}\label{Loss}
&\mathcal{L}(Q_t(i,{\pi[i]})) =\frac{1}{2}\bigg(Q_t(i,{\pi[i]})-v_t(i,{\pi[i]})\bigg)^2, \\
&\mathbb{E}\big[ \mathcal{L}(Q_t(i,{\pi[i]})) \big] = \nonumber \\
&\hphantom{aaaa} \frac{1}{2} \bigg\{\bigg(Q_t(i,{\pi[i]})-Q_t^*(i,{\pi[i]})\bigg)^2+\sigma_t^2(i,{\pi[i]}) \bigg\},\nonumber
\end{align}
where $\mathbb{V}\text{ar}[X]=\mathbb{E}[X^2]-\mathbb{E}[X]^2 $ is used in the last equality. By way of a stochastic stability formulation, the Lyapunov function of the system is the expected value of the loss function. 

Adopting the stochastic stability iteration framework from \cite{StochBound}, our optimization problem is to choose $\alpha^*_t(i,\pi[i])$ to minimize the expectation of the Lyapunov function, conditioned on the value at the previous state:
\begin{align}\label{optimization_alpha_def_eqn}
\alpha^*_t(i,{\pi[i]})\!= \operatorname*{arg\,min}_{\alpha_t(\!i,\pi[i])}  \mathbb{E}\bigg[ \mathbb{E}\big[\!\mathcal{L}\big(Q_{t+1}(\!i,\pi[i])\big)\big] | Q_{t}(i,{\pi[i]})\bigg]
\end{align}
\begin{theorem}\label{Theorem-adaptive-alpha}
For SARSA RL~\eqref{Q-update}, the optimal value of $\alpha_t$ for each state-action pair is estimated as a function of exponential moving averages, $f^*$ and $g^*$:
\begin{align}
\alpha^*_t(i,{\pi[i]})&= \frac{ {f^*_{t}(i,{\pi[i]})}^2}{g^*_{t}(i,{\pi[i]})},\label{alpha-final}\\
 f^*_{t+1}(\!i,{\pi[i]})&=  f^*_{t}(\!i,{\pi[i]})\!+\zeta \rho(\frac{\partial  \mathcal{L}}{\partial Q_t(\!i,{\pi[i]})} - f^*_{t}(\!i,{\pi[i]})), \nonumber\\
 g^*_{t+1}(\!i,{\pi[i]})&\!=\!  g^*_{t}(\!i,{\pi[i]})\!+\zeta \rho (\frac{\partial  \mathcal{L}}{\partial Q_t(\!i,{\pi[i]})}^2 \!\!\! - g^*_{t}(\!i,{\pi[i]})), \label{fg} \\
\frac{\partial  \mathcal{L}}{\partial Q_t(i,{\pi[i]})}&=Q_t(i,{\pi[i]}) -\mathcal{R}_{\pi[i]}(i,j)-\gamma Q_t(j,{\pi[j]})\label{gradLSample}
\end{align}
where $\rho=1$, if there is a new update for action ${\pi[i]}$ in state $i$ at time $t$. Otherwise, we set $\rho=0$. Also, $\zeta$ is a design parameter used for exponential convergence, $0 < \zeta < 1$.  We recover \eqref{gradLSample} from the definition of $v_t$ and taking the gradient of \eqref{Loss}. With this formulation, we can compute $\alpha^*_t$ with only $Q$-values and reward update information.
\end{theorem}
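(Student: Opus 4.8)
The plan is to carry out the one-step bias--variance minimization in \eqref{optimization_alpha_def_eqn} explicitly, and then to recognize the two quantities appearing in the minimizer as the first and second moments of the loss gradient \eqref{gradLSample}, which the recursions $f^*_t$ and $g^*_t$ in \eqref{fg} are precisely designed to track.

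First I would rewrite the SARSA update \eqref{Q-update} in error coordinates. Writing $e_t \defeq Q_t(i,\pi[i]) - Q^*_t(i,\pi[i])$ and using $Q_{t+1} = (1-\alpha_t)Q_t + \alpha_t v_t$ together with $\mathbb{E}(v_t) = Q^*_t(i,\pi[i])$ and $\mathbb{V}\text{ar}(v_t)=\sigma^2$ from \eqref{update_randomness_eqn}, one gets $e_{t+1} = (1-\alpha_t)\,e_t + \alpha_t\,(v_t - Q^*_t)$, where the second term is zero-mean when conditioned on $Q_t$. Substituting the definition \eqref{Loss} of the expected loss and Assumption \ref{sigma-timeinvariant} ($\sigma_{t+1}=\sigma$) into the objective of \eqref{optimization_alpha_def_eqn}, and treating the equilibrium target as fixed over the single step ($Q^*_{t+1}\approx Q^*_t$), the cross term drops out and the objective collapses to the scalar quadratic $\tfrac12\bigl[(1-\alpha_t)^2 e_t^2 + \alpha_t^2\sigma^2 + \sigma^2\bigr]$ in $\alpha_t$. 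Minimizing it, the stationarity condition $(1-\alpha_t)e_t^2 = \alpha_t\sigma^2$ gives $\alpha^*_t = e_t^2/(e_t^2+\sigma^2)$, and the second-order condition is trivially met since $e_t^2+\sigma^2>0$.

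The remaining step is to express numerator and denominator through observable quantities. From \eqref{gradLSample}, $\partial\mathcal{L}/\partial Q_t = Q_t - v_t = e_t - (v_t - Q^*_t)$, so conditioning on $Q_t$ and using again that $v_t - Q^*_t$ is zero-mean with variance $\sigma^2$ yields $\mathbb{E}[\partial\mathcal{L}/\partial Q_t \mid Q_t] = e_t$ and $\mathbb{E}[(\partial\mathcal{L}/\partial Q_t)^2 \mid Q_t] = e_t^2 + \sigma^2$. Hence $\alpha^*_t = \bigl(\mathbb{E}[\partial\mathcal{L}/\partial Q_t]\bigr)^2 \big/ \mathbb{E}[(\partial\mathcal{L}/\partial Q_t)^2]$. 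Finally I would verify that \eqref{fg} are exactly the exponential-moving-average estimators of these two moments --- standard first- and second-moment trackers with forgetting factor $\zeta$, updated only on the steps where a fresh sample for $(i,\pi[i])$ arrives ($\rho=1$) --- so that $f^*_t$ and $g^*_t$ converge to $\mathbb{E}[\partial\mathcal{L}/\partial Q_t]$ and $\mathbb{E}[(\partial\mathcal{L}/\partial Q_t)^2]$ for slowly varying statistics, which is \eqref{alpha-final}.

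I expect the main obstacle to be justifying the approximations hidden in the word ``estimated'': the equilibrium target $Q^*_t(i,\pi[i]) = \mathbb{E}[\mathcal{R}_{\pi[i]}(i,j) + \gamma Q_t(j,\pi[j])]$ is itself time-varying and coupled to the value at the successor state, so ``freezing'' it over one step and replacing the true moments by their moving-average estimates both need to be argued to leave the optimality claim intact, in the spirit of the stochastic-stability iteration analysis of \cite{StochBound}. A secondary point requiring care is the nested expectation in \eqref{optimization_alpha_def_eqn}: the inner $\mathbb{E}$ is over the sampling noise in the successor-step target (producing the bias-plus-variance form of $\mathbb{E}[\mathcal{L}(Q_{t+1})]$), while the outer $\mathbb{E}[\,\cdot\mid Q_t]$ is over the noise in $v_t$ that drives $Q_{t+1}$; keeping these two noise sources separate is exactly what makes the cross term vanish and the minimization well posed.
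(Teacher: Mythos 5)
Your proposal is correct and follows essentially the same route as the paper's proof: expand the one-step expected loss into the bias--variance quadratic $\tfrac12[(1-\alpha_t)^2 e_t^2+\alpha_t^2\sigma^2+\sigma^2]$, minimize to get $\alpha_t^*=e_t^2/(e_t^2+\sigma^2)$, identify $e_t$ and $e_t^2+\sigma^2$ as the first and second moments of $\partial\mathcal{L}/\partial Q_t$, and read off \eqref{fg} as their exponential-moving-average estimators. Your closing remarks on freezing $Q_t^*$ over one step and on the nested-expectation structure are points the paper glosses over, but they do not change the argument.
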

\hspace{0.25cm} \begin{proof}
Without loss of generality, we can write the loss function of a state-action pair using a superposition of all the samples as a single state-action pair update: 
\begin{align}\label{Loss-sample}
\mathbb{E}&[\mathcal{L}(Q_t(i,{\pi[i]}))]
\\&=\frac{1}{2}\bigg\{ Q_t(i,{\pi[i]})-\mathcal{R}_{\pi[i]}(i,j)-\gamma Q_t(j,{\pi[j]})\bigg\}^2. \nonumber
\end{align}

To minimize \eqref{Loss}, the Stochastic Gradient Descent (SGD) is used. Using \eqref{Loss-sample}, SGD is obtained as
\begin{align} \label{SGD_update}
Q_{t+1}&(i,{\pi[i]}) = Q_{t}(i,{\pi[i]})-\alpha_t(i,{\pi[i]}) \hspace{0.1cm} \mathbb{E}\big[\frac{\partial   \mathcal{L}(Q_t(i,{\pi[i]}))}{\partial Q_t(i,{\pi[i]})}\big] \nonumber \\
&=  Q_{t}(i,{\pi[i]})-\alpha_t(i,{\pi[i]}) \times\nonumber\\& \hspace{.8cm}\bigg( Q_t(i,{\pi[i]})-\mathcal{R}_{\pi[i]}(i,j)-\gamma Q_t(j,{\pi[j]}) \bigg).
\end{align}

Note this is consistent with SARSA RL update in \eqref{Q-update}. Equation \eqref{SGD_update} is rewritten as
\begin{align} \label{update-iid}
Q_{t+1}&(i,{\pi[i]})= (1-\alpha_t(i,{\pi[i]}) ) Q_{t}(i,{\pi[i]})\nonumber \\
&+\alpha_t(i,{\pi[i]})\bigg(Q_t^*(i,{\pi[i]})+\eta\hphantom{a}\sigma_t(i,{\pi[i]}) \bigg),
\end{align}
where $\eta$ is an i.i.d. sample with a zero-mean and unit-variance Gaussian distribution.
Therefore, using \eqref{Loss} and \eqref{update-iid}, the loss function after one-step SGD update is obtained as
\begin{align*} \label{Expected-Loss}
\mathbb{E}\bigg[ \mathbb{E}&\big[\mathcal{L}\big(Q_{t+1}(i,{\pi[i]})\big)\big] | Q_{t}(i,{\pi[i]})\bigg] \nonumber \\
=&\mathbb{E}\bigg[\frac{1}{2}\bigg( Q_{t}(i,{\pi[i]}) +\alpha_t(i,{\pi[i]})\times \bigg( \\
&\hspace{.8cm}Q_t^*(i,{\pi[i]}) + \eta\hphantom{i}\sigma_t(i,{\pi[i]})-Q_{t}(i,{\pi[i]}) \bigg) \\
&\hspace{.8cm}-Q_t^*(i,{\pi[i]}\bigg)^2+\sigma_{t+1}^2(i,{\pi[i]})\bigg]\nonumber\\
=& \frac{1}{2}\bigg( \big(1-\alpha_t(i,{\pi[i]}) \big)^2 \big( Q_{t}(i,{\pi[i]})-Q_t^*(i,{\pi[i]})\big)^2  \nonumber \\
& \hspace{0.9cm}+\alpha_t^2(i,{\pi[i]})\hphantom{i}\sigma_t^2(i,{\pi[i]})+\sigma_{t+1}^2(i,{\pi[i]}) \bigg) \nonumber
\end{align*}
\hspace{0.25cm} Here, we use Assumption \ref{sigma-timeinvariant} and solve the optimization problem in \eqref{optimization_alpha_def_eqn}:
\begin{equation} \label{alphastar}
\begin{aligned} 
&\alpha^*_t(i,{\pi[i]}) =\operatorname*{arg\,min}_{\alpha_t(i,{\pi[i]})} \bigg\{ \big(1-\alpha_t(i,{\pi[i]}) \big)^2 \times \bigg( \\
& Q_{t}(i,{\pi[i]})-Q_t^*(i,{\pi[i]})\bigg)^2 + (\alpha_t^2(i,{\pi[i]})+1) \sigma_t^2(i,{\pi[i]}) \bigg\} 
\\
&= \operatorname*{arg\,min}_{\alpha_t(i,{\pi[i]})} \bigg\{ -2\alpha_t(i,{\pi[i]})  \big( Q_{t}(i,{\pi[i]})-Q_t^*(i,{\pi[i]})\big)^2 + \\
&\quad \alpha_t^2(i,{\pi[i]}) \bigg( \big( Q_{t}(i,{\pi[i]})-Q_t^*(i,{\pi[i]})\big)^2+ \sigma_t^2(i,{\pi[i]})\bigg) \bigg\} 
\\
&=\frac{\big( Q_{t}(i,{\pi[i]})-Q_t^*(i,{\pi[i]})\big)^2}{ \big( Q_{t}(i,{\pi[i]})-Q_t^*(i,{\pi[i]})\big)^2+\sigma_t^2(i,{\pi[i]})}
\end{aligned}
\end{equation}
In the remainder of this proof, we present a numerical solution to calculate \eqref{alphastar}, at each time-step. Using \eqref{Loss}, the expected value and variance components of $\frac{\partial  \mathcal{L}}{\partial Q_t(i,{\pi[i]})}$ are written as
\begin{align*}
\mathbb{E}\big[ \frac{\partial  \mathcal{L}}{\partial Q_t(i,{\pi[i]})} \big]&= Q_t(i,{\pi[i]})-Q_t^*(i,{\pi[i]}), \nonumber\\
\mathbb{V}\text{ar} \big[ \frac{\partial \mathcal{L}}{\partial Q_t(i,{\pi[i]})} \big]&=\sigma_t^2(i,{\pi[i]}). \nonumber
\end{align*}
This allows us to rewrite $\alpha^*_t(i,{\pi[i]})$ in \eqref{alphastar} as
\begin{align} \label{alphastar2}
\alpha^*_t(i,{\pi[i]})& = {\mathbb{E}\big[ \frac{\partial  \mathcal{L}}{\partial Q_t(i,{\pi[i]})} \big]^2}/{\mathbb{E}\big[ {\frac{\partial  \mathcal{L}}{\partial Q_t(i,{\pi[i]})}}   ^2 \big]}.
\end{align} 
The moving average can be used to calculate the expected values. The exponential moving average with time constant $\zeta$ can be obtained using~\eqref{fg}. By setting
\begin{align*}
f^*_{t}(i,{\pi[i]})= \mathbb{E}\big[ \frac{\partial  \mathcal{L}}{\partial Q_t(i,{\pi[i]})} \big], \ \
g^*_{t}(i,{\pi[i]})= \mathbb{E}\big[ {\frac{\partial  \mathcal{L}}{\partial Q_t(i,{\pi[i]})}}^2 \big],
\end{align*}
The adaptive learning rate \eqref{alphastar2} results in \eqref{alpha-final}. \end{proof}
\begin{remark}
Intuitively, \eqref{alphastar} illustrates that the learning rate is reduced when the measurements (gradients of the SGD) have large covariance. The learning rate will be more affected by the measurements covariance when our estimate Q-value is closer to the optimal Q-value, $Q^*$.
\end{remark}

\subsection{Distributed SARSA RL for Non-stationary Environment} \label{SubSec:Dis-Learning}
In Sec.~\ref{subsec:SARSACentralized} and~\ref{SubSec:adaptive}, we introduced an adaptive SARSA algorithm for non-stationary environments. Here, we present a dynamic average tracking algorithm to estimate the time-varying Q-values in a distributed manner that is, by nature, scalable to a large number of autonomous vehicles. The mathematical overview is as follows. First, we present the proposed update rules with each agent $i$'s structure. Second, we make assumptions on the system to present upper limit bounds on estimate errors. Third, we show convergence of a stochastic difference equation to prove that the estimated Q-values converge to the true values with bounded errors.

The update rule for agent $i$ and the observation pair $(l,a_j^l)$ is proposed as follows:
\begin{align} \label{Dist-Sarsa}
&{\hat{Q}}_{t+1}^i={\hat{Q}}_{t}^i+\sum_{k=1}^{k=N} \mathcal{A}_{ik}(t) \big({\hat{Q}}_{t}^k-{\hat{Q}}_{t}^i\big)+\bold{r}^i_t,  \nonumber\\
&\bold{r}^i_t= N \bold{i}^i_t \alpha^i_t(l,a_j^l) r^i_t,
\\
& {\color{black}r^i_t=\mathcal{R}_{a_j^l}(l,j)+\gamma \hat{Q}_t^i(j,\pi[j])-\hat{Q}_t^i(l,a_j^l)}, \ \ r^i_0=0, \forall i, \nonumber
\end{align}
where $N$ is the number of agents, $\mathcal{A}=[\mathcal{A}_{ik}] \in \mathbb{R}^{N\times N}$ is the adjacency matrix of communication among agents defined in Sec. \ref{Sec:Prelim} and Assumption \ref{conn-graph}, and $j$ is the successor  state after conducting action $a_j^l$ at state $l$. Also, $\bold{i}^i_t$ is a vector with one non-zero entry corresponding to the state-action pair $(l,a^l_j)$, unless agent $i$ does not select an action. Note $\bold{i}^i_t \in \mathbb{R}^{\sum_{k=1}^{n_q} |A(k)|}$, where $n_q$ is the number of cells in the city, and $|A(k)|$ is the cardinality of the action set for a given cell. For example, if every cell has an action to every other cell, $\bold{i}^i_t \in \mathbb{R}^{n_q^2}$. The learning correction for a state-observation pair is $r_t^i$, and $\bold{r}^i_t$ is the stacked vector form.

The agent's structure is as follows. Each agent $i$ maintains its estimate of Q-values for state action pairs at time $t$, in vector ${\hat{Q}}^i_t \in\mathbb{R}^{\sum_{k=1}^{n_q} |A(k)|}$. The agent $i$'s estimate of the optimal learning-rate vector is obtained as:
\begin{align} \label{Dist-alpha}
&\alpha^i_t(j,a_j^l)=\frac{\hat{ {f}}^i_t(j,a_j^l)^2}{\hat{ {g}}^i_t(j,a_j^l)}, \   
\hat{ {f}}^i_t=\hat{ {f}}^i_{t-1}+\zeta ({\omega}^i_{t-1}-\hat{ {f}}^i_{t-1}), \\
&\hat{ {g}}^i_t=\hat{ {g}}^i_{t-1}+\zeta ((\omega^i_{t-1})^2-\hat{ {g}}^i_{t-1}), \nonumber\\
&{\omega}^i_{t}= {\omega}^i_{t-1}+\sum_{k=1}^{k=N} \mathcal{A}_{ik}(t)({\omega}^i_{t-1}-{\omega}^k_{t-1})+N (\bold{i}^i_t r^i_t-\bold{i}^i_{t-1} r^i_{t-1})\nonumber
\end{align}
where $\hat{ {f}}^i_t$ and $\hat{ {g}}^i_t$ are the agent $i$'s estimates of $f^*_t$, and $g^*_t,$ defined in \eqref{fg}, respectively. Also, ${\omega}^i_{t}$ is the estimate of the gradient of the loss function, $\frac{\partial  \mathcal{L}}{\partial Q_t(j,a_j^l)}$, written in a vector form and $(\omega^i_{t-1})^2$ is obtained by squaring each element. The initial values are chosen as ${\omega}^i_0=\bold{0}$, and $\hat{ {f}}^i_0=\hat{ {g}}^i_0=\bold{1}, \forall i$. The information updates available to agents are local customer requests data: state transitions of departure and arrival cells, fare, and travel time. The algorithms in  \eqref{Dist-Sarsa} and \eqref{Dist-alpha} are the distributed forms of \eqref{Q-update} and \eqref{alpha-final}, respectively. Using \eqref{Dist-Sarsa} and \eqref{Dist-alpha}, each agent only requires local information and local interactions to update its values.
\newline \indent The agents share their Q-value and $\omega_t$ estimates with their neighbors. We make the following assumptions of our system. 
\begin{assumption}\label{bound-changes}
There exists a bounded, time-invariant constant, $r_{\max}$, such that for all agents and all time, $\abs{r^i_{t}} \leq r_{\max}$. This assumption also implies there exists another constant, $\Delta r_{\max}$, where $\abs{r^i_{t} - r^i_{t-1}} \leq \Delta r_{\max}$ for all agents and all time.  
\end{assumption}
\begin{assumption}\label{conn-graph}
The digraph $\mathcal{G}_t \triangleq (V,E_t)$, with its adjacency matrix $\mathcal{A}(t)=[\mathcal{A}_{ik}(t)]$ from (\ref{Dist-Sarsa}),
satisfy the following:
\begin{itemize}
\item[(I)] Periodic Strong Connectivity:
There exists a positive integer $\mathbf{b}$, such that the digraph $\mathcal{G}_t \triangleq (V,E_t\cup E_{t+1} \cup \cdots \cup E_{t+\mathbf{b}-1})$ is strongly connected for all $t$.
\item[(II)] Non-degeneracy: There exists a constant $\gamma \in (0,1)$ such that $\mathcal{A}_{ik}(t) \in \{0\} \cup [\gamma,1]$.
\item [(III)] Balanced Communication: The matrix $\mathcal{A}(t)$ is doubly stochastic for all $t$, i.e., $\mathbf{1}^T\mathcal{A}(t)= \mathbf{1}^T$ and $\mathcal{A}(t)\mathbf{1}= \mathbf{1}.$
\end{itemize}
\end{assumption}
\hspace{0.25cm} Now, we present Theorem~\ref{corollary-DAT} and Corollary~\ref{remark-alpha-bound} to define the following terms: upper limit of estimation errors, $\delta_Q$ and upper limit of estimation error of learning-rate, $\delta_a$. 
\begin{theorem} \label{corollary-DAT}
Suppose that Assumptions \ref{bound-changes} and \ref{conn-graph}  hold. Under the control laws given by \eqref{Dist-Sarsa} and \eqref{Dist-alpha}, the distributed average tracking goals for all agents ($\forall i$) are achieved in some finite time $\kappa$ with bounded error $\delta_Q$ and $\delta_\omega$, i.e., 
\begin{align*}
\lim_{t\rightarrow \infty}\norm{\hat{Q}_t^i-\frac{1}{N}\big( \sum_{j=1}^{j=N}\hat{{Q}}_{0}^j+\sum_{\tau=1}^{\tau=t-1} \sum_{j=1}^{j=N} \bold{r}^j_\tau \big)} \leq \delta_Q, \\
\lim_{t\rightarrow \infty}\norm{\omega^i_{t}-\rho\frac{\partial \mathcal{L}}{\partial Q_t(i,{\pi[i]})}}\leq \delta_\omega, 
\end{align*}
where 
\begin{align}
\delta_Q &= \frac{2\sqrt{N} r_{\max} }{1-\max_{\mathcal{A}(t)}\sigma_{N-1}(\mathcal{A}(t))} \\
\delta_\omega &= \frac{2\sqrt{N} \Delta r_{\max} }{1-\max_{\mathcal{A}(t)}\sigma_{N-1}(\mathcal{A}(t))},
\end{align}
where $\sigma_{N-1}(\mathcal{A}(t))$ denotes the second largest singular value of the matrix $\mathcal{A}(t)$ and $N$ term denotes the number of agents.
\end{theorem}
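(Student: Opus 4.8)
The plan is to view both update rules~\eqref{Dist-Sarsa} and~\eqref{Dist-alpha} as perturbed average-consensus (distributed average tracking) recursions sharing the common consensus matrix $\Acal(t)$, and to reduce the theorem to a geometric-series bound on the deviation of each agent's estimate from the agent-average. First I would stack the per-agent quantities, writing $X_t\defeq[(\hat{Q}^1_t)^{T},\dots,(\hat{Q}^N_t)^{T}]^{T}$ so that~\eqref{Dist-Sarsa} becomes $X_{t+1}=(\Acal(t)\otimes I)X_t+\mathbf{r}_t$ with $\mathbf{r}_t$ the stacked forcing term built from the $\mathbf{r}^i_t$, and analogously stacking the $\omega^i_t$ of~\eqref{Dist-alpha} into $\Omega_t$, whose forcing term is the increment $N(\mathbf{i}^i_t r^i_t-\mathbf{i}^i_{t-1}r^i_{t-1})$. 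Using Assumption~\ref{conn-graph}(III), i.e.\ $\mathbf{1}^{T}\Acal(t)=\mathbf{1}^{T}$ and $\Acal(t)\mathbf{1}=\mathbf{1}$, a one-line telescoping computation shows the agent-average $\bar{X}_t\defeq\frac{1}{N}\sum_i\hat{Q}^i_t$ obeys $\bar{X}_{t+1}=\bar{X}_t+\frac{1}{N}\sum_i\mathbf{r}^i_t$; together with the initializations $r^i_0=0$ and $\omega^i_0=0$ this makes $\bar{X}_t$ \emph{exactly} the target signal $\frac{1}{N}(\sum_j\hat{Q}^j_0+\sum_{\tau=1}^{t-1}\sum_j\mathbf{r}^j_\tau)$ in the statement (and $\bar{\Omega}_t$ the corresponding average of gradient updates, which matches $\rho\,\partial\mathcal{L}/\partial Q_t$ up to the sign and index bookkeeping in~\eqref{gradLSample}). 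Thus the theorem is equivalent to a uniform-in-$t$ bound on the disagreement $e_t\defeq X_t-\mathbf{1}\otimes\bar{X}_t$ and its $\Omega$-analogue.

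Next I would propagate the disagreement. Subtracting the average recursion from the full recursion, and using $(\Acal(t)\otimes I)(\mathbf{1}\otimes I)=\mathbf{1}\otimes I$, gives $e_{t+1}=(\Acal(t)\otimes I)e_t+(\Pi\otimes I)\mathbf{r}_t$ with $\Pi\defeq I-\frac{1}{N}\mathbf{1}\mathbf{1}^{T}$ the projector onto $\mathbf{1}^{\perp}$. The key structural fact is that $\Acal(t)$ is non-expansive on $\mathbf{1}^{\perp}$ with contraction factor equal to its second largest singular value: double stochasticity forces $\norm{\Acal(t)}_2=1$ with top singular vector $\mathbf{1}$, and $\mathbf{1}^{\perp}$ is $\Acal(t)$-invariant, so $\norm{(\Acal(t)\otimes I)e_t}\le\sigma_{N-1}(\Acal(t))\norm{e_t}$ since $e_t\perp\mathbf{1}$. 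Writing $\beta\defeq\sup_t\sigma_{N-1}(\Acal(t))<1$ and bounding the forcing term through Assumption~\ref{bound-changes} by a constant independent of $t$ — of order $\sqrt{N}\,r_{\max}$ for the $\hat{Q}$ recursion and $\sqrt{N}\,\Delta r_{\max}$ for the $\Omega$ recursion, where I would separately check that the adaptive rates stay in $[0,1]$ (from $\hat{f}^2\le\hat{g}$ for the moving averages in~\eqref{Dist-alpha}, initialized at $\hat{f}_0=\hat{g}_0=\mathbf{1}$) so that the factor $N$ in $\mathbf{r}^i_t$ is controlled — and unrolling the scalar recursion $\norm{e_{t+1}}\le\beta\norm{e_t}+\text{const}$ gives $\norm{e_t}\le\beta^t\norm{e_0}+\frac{\text{const}}{1-\beta}$, so that $\limsup_t\norm{e_t}\le\delta_Q$ (resp.\ $\delta_\omega$). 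The finite-time-$\kappa$ phrasing in the statement is then just the observation that the transient $\beta^\kappa\norm{e_0}$ becomes negligible relative to the steady-state term once $\kappa$ is large enough.

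The main obstacle is treating the \emph{time-varying} graph rigorously. A single doubly-stochastic matrix need not be a strict contraction on $\mathbf{1}^{\perp}$ — if $\Gcal_t$ is disconnected at time $t$ then $\sigma_{N-1}(\Acal(t))=1$ — so the one-step argument, and hence the displayed constant $1-\max_{\Acal(t)}\sigma_{N-1}(\Acal(t))$, is legitimate only if each instantaneous graph is connected. Under only Assumption~\ref{conn-graph}(I)--(II) (strong connectivity jointly over windows of length $\mathbf{b}$, and non-degeneracy $\Acal_{ik}(t)\in\{0\}\cup[\gamma,1]$) one must instead invoke the standard ergodicity lemma for products of SIA stochastic matrices: the product $\Acal(t+\mathbf{b}-1)\cdots\Acal(t)$ over each window is a uniform strict contraction on $\mathbf{1}^{\perp}$ with a factor of the form $1-c\gamma^{\mathbf{b}}$, after which one telescopes $e_t$ over windows and accumulates the $\mathbf{b}$ forcing terms occurring in each window. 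Reconciling that window-based rate with the clean closed form quoted in the statement is the delicate point; I would either keep the bound in terms of the window contraction factor, or state the extra hypothesis that each $\Gcal_t$ is connected. A minor additional subtlety is the coupling between the two recursions — $\alpha^i_t$ in~\eqref{Dist-Sarsa} is produced by~\eqref{Dist-alpha} — but since $\alpha^i_t$ is uniformly bounded, the forcing bound for the $\hat{Q}$ recursion holds irrespective of the $\omega^i_t$ values, so the two analyses decouple and can be carried out one after the other rather than as a joint system.
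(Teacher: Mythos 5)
Your argument rests on the same mathematical mechanism as the paper's proof but carries it out where the paper does not: the published proof is essentially a citation, observing that \eqref{Dist-Sarsa} and \eqref{Dist-alpha} are distributed dynamic average tracking recursions and then importing the error bound wholesale from Corollary~6 of the Distributed Bayesian Filtering reference \cite{DBF}, massaged by the discrete Gronwall lemma into the displayed constant $2\sqrt{N}\Delta/(1-\max_{\mathcal{A}(t)}\sigma_{N-1}(\mathcal{A}(t)))$ with $\Delta$ equal to $r_{\max}$ or $\Delta r_{\max}$. Your decomposition --- the agent average evolving exactly as the target signal by double stochasticity and the zero initializations, plus a disagreement vector on $\mathbf{1}^{\perp}$ contracted by $\sigma_{N-1}(\mathcal{A}(t))$ and driven by a bounded forcing term, unrolled as a geometric series --- is precisely the content of that cited result, so your proof is a self-contained version of the paper's and is, if anything, more complete: the paper never verifies that $\alpha^i_t\le 1$ at this stage (its Corollary~\ref{remark-alpha-bound} comes \emph{after} and depends on this theorem), whereas your Jensen-type observation $\hat f^2\le\hat g$ for the identically weighted exponential moving averages closes that loop without circularity.

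The obstacle you flag is genuine and is not addressed by the paper. Assumption~\ref{conn-graph}(I) guarantees connectivity only of the union graph over windows of length $\mathbf{b}$, so an individual $\mathcal{A}(t)$ may satisfy $\sigma_{N-1}(\mathcal{A}(t))=1$, in which case the stated constants $\delta_Q$ and $\delta_\omega$ degenerate and the one-step contraction argument (and the cited corollary as transcribed) yields nothing. The theorem as written is therefore only meaningful under the stronger implicit hypothesis that every instantaneous graph is connected with $\sup_t\sigma_{N-1}(\mathcal{A}(t))<1$; otherwise one must pass, as you say, to the SIA-product ergodicity argument over windows, which replaces the denominator by a window contraction factor of the form $1-c\gamma^{\mathbf{b}}$ and multiplies the numerator by the $\mathbf{b}$ forcing terms accumulated per window. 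Either fix is acceptable; the paper's statement silently assumes the former. Your remaining remarks --- the sign/indexing discrepancy between $r^i_t$ and $\partial\mathcal{L}/\partial Q_t$ (immaterial since $\alpha$ depends on $f^2/g$), and the decoupling of the two recursions via the uniform bound on $\alpha^i_t$ --- are correct and resolve ambiguities the paper leaves open.
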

\begin{proof}
Both \eqref{Dist-Sarsa} and \eqref{Dist-alpha} are distributed dynamic average tracking equations for discrete time signals. Convergence analysis of equations of this form are presented in \cite{DBF}. Here, our goal is to show that the estimated signals $\hat{Q}_t^i$ and $\omega_t^i$ converge to the average of all agents signals with bounded error. 
The Distributed Bayesian Filtering algorithm (DBF) presents an estimation error of the exponentially-stabilizing consensus estimation algorithm (see Corollary 6 of \cite{DBF}). By using the discrete Gronwall lemma, its error bound, $\delta$ can be manipulated as follows:
\begin{align*}
\delta \leq \frac{2\sqrt{N}\Delta}{1-\max_{\mathcal{A}(t)}\sigma_{N-1}(\mathcal{A}(t))}, \hspace{0.5cm} \forall t>\kappa 
\end{align*}
where $\Delta$ is an upper bound of the update value, corresponding to $r_{\max}$ and $\Delta r_{\max}$ for $Q^i_t$ and $\omega^i_t$ signals, respectively. 
\end{proof}
\begin{corollary} \label{remark-alpha-bound}
With proper tuning, the upper limit of the estimation error of the optimal learning rate $\delta_\alpha$ is bound by one for each state action pair.
\begin{align*}
\abs{\alpha^i_t-\alpha^*_t} \leq \delta_\alpha \leq 1
\end{align*}
\end{corollary}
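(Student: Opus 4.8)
The plan is to prove the bound in two layers. The first layer is a robust, unconditional argument showing that both the ideal learning rate $\alpha^*_t$ and its distributed estimate $\alpha^i_t$ always lie in $[0,1]$, which forces $\abs{\alpha^i_t-\alpha^*_t}\le 1$ no matter what; the second layer is a quantitative argument that identifies a meaningful $\delta_\alpha$ in terms of $\delta_\omega$, $r_{\max}$ and $\zeta$, which under ``proper tuning'' never exceeds the trivial bound. For the ideal rate, recall from \eqref{alphastar2} that $\alpha^*_t(i,\pi[i]) = f^*_t(i,\pi[i])^2/g^*_t(i,\pi[i])$ with $f^*_t=\mathbb{E}[\partial\mathcal{L}/\partial Q_t]$ and $g^*_t=\mathbb{E}[(\partial\mathcal{L}/\partial Q_t)^2]$. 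Jensen's inequality (equivalently Cauchy--Schwarz) gives $(f^*_t)^2=\big(\mathbb{E}[\,\cdot\,]\big)^2\le\mathbb{E}\big[(\,\cdot\,)^2\big]=g^*_t$, and since $g^*_t\ge 0$ this shows $\alpha^*_t\in[0,1]$.

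For the estimate $\alpha^i_t=(\hat f^i_t)^2/\hat g^i_t$, I would unroll the exponential moving-average recursions in \eqref{Dist-alpha}. Writing $\hat f^i_t=(1-\zeta)\hat f^i_{t-1}+\zeta\,\omega^i_{t-1}$ and iterating from $\hat f^i_0=1$ exhibits $\hat f^i_t$ as a convex combination (nonnegative weights summing to one) of the initial value $1$ and the past samples $\omega^i_0,\dots,\omega^i_{t-1}$, and exhibits $\hat g^i_t$ as the \emph{same} convex combination of $1$ and $(\omega^i_0)^2,\dots,(\omega^i_{t-1})^2$. Applying Jensen to the convex map $x\mapsto x^2$ then yields $(\hat f^i_t)^2\le\hat g^i_t$, and since $\hat g^i_t>0$ (each summand is a square and the weight on $\hat g^i_0=1$ is strictly positive), we get $\alpha^i_t\in[0,1]$ for all $i$ and $t$. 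Combining this with the previous paragraph gives $\abs{\alpha^i_t-\alpha^*_t}\le 1$, which is the stated bound $\delta_\alpha\le 1$.

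To make $\delta_\alpha$ quantitative rather than merely capped at one, I would propagate the error bound of Theorem~\ref{corollary-DAT}. That result gives $\norm{\omega^i_t-\rho\,\partial\mathcal{L}/\partial Q_t(i,\pi[i])}\le\delta_\omega$; since an exponential moving average with rate $\zeta$ is a nonexpansive averaging operator in the sup norm, $\abs{\hat f^i_t-f^*_t}$ and $\abs{\hat g^i_t-g^*_t}$ are controlled by $\delta_\omega$ together with a transient term decaying like $(1-\zeta)^t$ (from the initial condition) and, in the case of $\hat g^i_t$, a factor involving a uniform bound $\omega_{\max}$ on $\abs{\omega^i_t}$ obtained from Assumption~\ref{bound-changes} and the boundedness already established in Theorem~\ref{corollary-DAT}. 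Finally, $(f,g)\mapsto f^2/g$ is locally Lipschitz on any region where $g\ge g_{\min}>0$; ``proper tuning'' amounts to choosing $\zeta$ (and keeping $\hat g^i_0=1$, together with a mild non-degeneracy floor on $g^*_t$) so that this lower bound holds uniformly, and then $\delta_\alpha$ is the resulting Lipschitz constant times $\delta_\omega$ plus the decaying transient, with the cap $\delta_\alpha\le 1$ always available from the first layer.

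I expect the main obstacle to be precisely that last quantitative step: the Lipschitz constant of $f^2/g$ blows up as $g\to 0$, so the crux is arguing that the denominator estimate $\hat g^i_t$ remains uniformly bounded away from zero (this is where the initialization $\hat g^i_0=1$ and a lower bound on $g^*_t$ enter) and tracking the squared-sample error in $\hat g^i_t$ cleanly using the uniform bound $\omega_{\max}$. The two $[0,1]$-membership arguments, by contrast, are short Jensen-type estimates, and they are exactly why the corollary holds unconditionally with $\delta_\alpha\le 1$.
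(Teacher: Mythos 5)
Your argument is sound, and for the key ``cap at one'' claim it takes a genuinely different and in fact cleaner route than the paper. The paper's proof writes $\abs{\alpha^i_t-\alpha^*_t}$ as a difference of the two ratios, bounds it in terms of the (assumed) estimation errors $\delta_{\hat f},\delta_{\hat g}$ with the denominator condition $g^*_t>\delta_{\hat g}$, and then argues that ``proper tuning'' means rescaling the reward so that the resulting bound drops below one (because $g^*_t$ enters the denominator quadratically). You instead prove unconditionally that \emph{both} $\alpha^*_t$ and $\alpha^i_t$ lie in $[0,1]$: for $\alpha^*_t$ this is immediate from \eqref{alphastar} or from Jensen applied to \eqref{alphastar2}, and for $\alpha^i_t$ you unroll the exponential moving averages in \eqref{Dist-alpha}, observe that $\hat f^i_t$ and $\hat g^i_t$ are the \emph{same} convex combination of $\{1,\omega^i_0,\dots,\omega^i_{t-1}\}$ and of their squares (using $\hat g^i_0=(\hat f^i_0)^2=1$ and the strictly positive weight $(1-\zeta)^t$ on the initial value to keep $\hat g^i_t>0$), so Cauchy--Schwarz gives $(\hat f^i_t)^2\le\hat g^i_t$. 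This yields $\abs{\alpha^i_t-\alpha^*_t}\le 1$ with no assumptions on reward scaling or on $g^*_t$, which is stronger than what the paper's proof delivers for the same conclusion. Your second, quantitative layer essentially reconstructs the paper's actual argument (propagating $\delta_\omega$ from Theorem~\ref{corollary-DAT} through the nonexpansive averaging to get $\delta_{\hat f},\delta_{\hat g}$, then using local Lipschitzness of $f^2/g$ away from $g=0$), and you correctly identify the same crux the paper papers over with its $g^*_t>\delta_{\hat g}$ assumption: keeping the denominator uniformly bounded below. The one caveat is that if a strictly smaller $\delta_\alpha<1$ is needed downstream (as it is in Theorem~\ref{proposition-bounded-optimality}, where the leading coefficient requires $\delta_a^2+k'-1<0$ with $k'>0$), your first layer alone does not suffice and the quantitative layer must be carried out; but for the corollary as literally stated, your argument is complete.
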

\begin{proof} 
The error bound of the estimate of the optimal learning-rate for each state-action pair can be obtained as
\begin{align} \label{alpha-bound-error}
\abs{\alpha^i_t-\alpha^*_t}=\abs{\frac{(\hat{f}^i_t)^2}{\hat{g}^i_t}-\frac{({f}^*_t)^2}{{g}^*_t}}\leq \abs{\frac{\delta_{\hat{f}}^2+2\delta_{\hat{f}} f_t^*}{{g}^*_t({g}^*_t-\delta_{\hat{g}})}} \leq \delta_\alpha,   \forall t>\kappa,
\end{align}
where $\delta_{\hat{f}}$ and $\delta_{\hat{g}}$ are the estimation errors, i.e., $\|f^*_t-\hat{f}^i_t\|\leq \delta_{\hat{f}}$ and $\|g^*_t-\hat{g}^i_t\|\leq \delta_{\hat{g}}, \forall t$ and for all state-action pairs. Note that the state-action argument for true and estimate values of $\alpha_t(i,\pi[i])$, $f_t(i,\pi[i])$ and $g_t(i,\pi[i])$ are dropped for readability. Here, it is assumed that the environment changes, defined in Assumption~\ref{bound-changes}, are slow enough to have ${g}^*_t> \delta_{\hat{g}}, \forall t$. From \eqref{alphastar}, we know that $\alpha^* \leq 1$, it is easy to see that by selecting a proper scaling for reward $\mathcal{R}_{a_l}(\cdot)$, the upper bound error $\delta_\alpha$ can remain lower than $1$. Scaling the reward function with a positive constant scales all Q-values for all state-action pairs, however $\delta_\alpha$ will be scaled down due to the effect of having ${g^*_t}^2$ in its denominator. 
\end{proof}

Now we present Theorem~\ref{corollary_sde} to show the equivalence of our update law to a particular stochastic difference equation.
\begin{theorem}\label{corollary_sde}
Suppose that Assumptions \ref{sigma-timeinvariant}, \ref{bound-changes}, and \ref{conn-graph} hold. Under the control laws given by \eqref{Dist-Sarsa} and \eqref{Dist-alpha}, the governing difference equation for agent $i$ is as follows: 
\begin{align}
\hat{Q}_{t+1}^i(i,\pi[i])&=\hat{Q}_{t}^i(i,\pi[i])+\label{eq:theorem2} \\
\alpha^k_t &\bigg[\mathcal{R}_{\pi[i]}(i,j)+\gamma \hat{Q}^k_t(j,\pi[j])-\hat{Q}_t^k(i,\pi[i])\bigg]+\epsilon^i\nonumber
\end{align}
\end{theorem}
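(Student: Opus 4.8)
The plan is to start from the coupled update rules \eqref{Dist-Sarsa} and \eqref{Dist-alpha} and reduce them, via the convergence guarantees already established in Theorem~\ref{corollary-DAT} and Corollary~\ref{remark-alpha-bound}, to the single-agent-style recursion \eqref{eq:theorem2} with an explicitly bounded residual term $\epsilon^i$. First I would invoke Theorem~\ref{corollary-DAT}: after the finite time $\kappa$, each agent's estimate $\hat{Q}_t^i$ tracks the network average $\frac{1}{N}\big(\sum_j \hat{Q}_0^j + \sum_\tau \sum_j \mathbf{r}^j_\tau\big)$ up to the bounded error $\delta_Q$, and similarly $\omega_t^i$ tracks $\rho\,\partial\mathcal{L}/\partial Q_t$ up to $\delta_\omega$. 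Writing $\bar{Q}_t \defeq \frac{1}{N}\sum_j \hat{Q}_t^j$ for the exact average and using the doubly-stochastic property (Assumption~\ref{conn-graph}(III)) so that the consensus terms $\sum_k \mathcal{A}_{ik}(t)(\hat{Q}_t^k-\hat{Q}_t^i)$ vanish in the average, I would show that $\bar{Q}_t$ obeys $\bar{Q}_{t+1} = \bar{Q}_t + \frac{1}{N}\sum_j \mathbf{r}^j_t = \bar{Q}_t + \frac{1}{N}\sum_j N\,\mathbf{i}^j_t \alpha^j_t r^j_t$. Restricting attention to a fixed coordinate $(i,\pi[i])$ and to the time step at which agent $k$ alone updates that coordinate (the indicator vectors $\mathbf{i}^j_t$ isolate a single state-action pair per agent), the sum collapses to the single term $\alpha^k_t\big[\mathcal{R}_{\pi[i]}(i,j)+\gamma \hat{Q}^k_t(j,\pi[j])-\hat{Q}_t^k(i,\pi[i])\big]$, which is exactly the bracketed expression in \eqref{eq:theorem2}.

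The remaining work is to collect all the discrepancies between $\hat{Q}_t^i$ and $\bar{Q}_t$, between $\alpha^k_t$ and $\alpha^*_t$, and between $\omega^i_t$ and the true gradient, into the single lumped term $\epsilon^i$, and argue it is bounded. Concretely, $\epsilon^i$ absorbs (a) the consensus tracking error $\hat{Q}_{t+1}^i - \bar{Q}_{t+1}$, which is $O(\delta_Q)$ by Theorem~\ref{corollary-DAT}; (b) the difference between using agent $k$'s local learning rate $\alpha^k_t$ and the true optimal $\alpha^*_t$, controlled by Corollary~\ref{remark-alpha-bound} via $\delta_\alpha \leq 1$ and the bound $|r^i_t| \leq r_{\max}$ from Assumption~\ref{bound-changes}, hence $O(\delta_\alpha r_{\max})$; and (c) the error from computing $\alpha^k_t$ through the estimated moving averages $\hat{f}^i_t,\hat{g}^i_t$ rather than $f^*_t,g^*_t$, again folded in through $\delta_{\hat f},\delta_{\hat g}$ as in \eqref{alpha-bound-error}. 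Assumption~\ref{sigma-timeinvariant} is what makes the optimal $\alpha^*_t$ expression \eqref{alphastar} well-posed and time-stable, so the moving-average estimates have a fixed target to converge to; I would cite it here. Summing these contributions with the triangle inequality gives a finite bound on $\|\epsilon^i\|$ depending only on $N$, $r_{\max}$, $\Delta r_{\max}$, and the spectral gap $1-\max_{\mathcal{A}(t)}\sigma_{N-1}(\mathcal{A}(t))$.

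The main obstacle I anticipate is bookkeeping the indicator-vector structure carefully: \eqref{Dist-Sarsa} updates a whole stacked vector, but at each time step only the coordinate selected by $\mathbf{i}^i_t$ receives a nonzero correction, and different agents may be updating different coordinates simultaneously. Making precise the claim that, \emph{per coordinate}, the network-average update reduces to a single agent $k$'s SARSA-style increment (plus the error term) requires either an asynchronous-update convention or an averaging-over-the-$\mathbf{b}$-window argument using the periodic strong connectivity of Assumption~\ref{conn-graph}(I); I would handle this by fixing the coordinate $(i,\pi[i])$, letting $k$ denote whichever agent most recently serviced a request mapping to that state-action pair, and treating the contributions of all other agents' corrections to that coordinate as part of $\epsilon^i$ — they are bounded because each $|r^j_t|\le r_{\max}$ and the consensus weights are sub-stochastic. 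Once the coordinate-wise reduction is set up cleanly, the rest is the triangle-inequality accounting described above.
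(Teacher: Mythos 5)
Your proposal is correct and follows essentially the same route as the paper: invoke Theorem~\ref{corollary-DAT} to write $\hat{Q}^i_{t+1}$ as the network average of the accumulated corrections plus a bounded error, telescope one step so that the increment is $\sum_j \mathbf{i}^j_t\alpha^j_t r^j_t$, and collapse this to the single updating agent $k$ for the fixed coordinate $(i,\pi[i])$, absorbing all residuals into $\epsilon^i$ with $|\epsilon^i|\leq\delta_Q$. The only difference is that your items (b) and (c) on learning-rate estimation error are not needed here, since the theorem retains the local $\alpha^k_t$ rather than $\alpha^*_t$; the paper defers that error to Theorem~\ref{proposition-bounded-optimality}.
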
 where $|\epsilon^i| \leq \delta_Q$ is a stochastic estimation error and $\delta_Q$ is defined in Theorem~\ref{corollary-DAT}.

\begin{proof}
Under Assumptions \ref{bound-changes} and \ref{conn-graph}, and by using the control laws \eqref{Dist-Sarsa} and \eqref{Dist-alpha}, Theorem~\ref{corollary-DAT} holds. Then, by selecting $\hat{Q}_{0}^i = Q^*_{\text{MDP}}, \forall i$ and defining $\bold{\varepsilon}^i$ as the aggregated error vector of scalar values $\epsilon^i$, we have 
\begin{align}\label{eq-intheorem} 
&\hat{Q}_{t+1}^i=\hat{Q}_{0}^i+\frac{1}{N}\sum_{\tau=1}^{\tau=t} \sum_{j=1}^{j=N} \bold{r}^j_\tau+\bold{\varepsilon}^i\nonumber\\
&\hspace{0.5cm}=\hat{Q}_{0}^i+\frac{1}{N}\sum_{\tau=1}^{\tau=t-1} \sum_{j=1}^{j=N} \bold{r}^j_\tau + \frac{1}{N} \sum_{j=1}^{j=N} \bold{r}^j_t\!+\!\bold{\varepsilon}^i\\
&\hspace{0.5cm}=\hat{Q}_{t}^i+ \sum_{j=1}^{j=N}\bold{i}^j_t \alpha^j_t r^j_t+\bold{\varepsilon}^i\nonumber
\end{align}
where in last equality we have used the fact that $\hat{Q}_{t}^i=\hat{Q}_{0}^i+\frac{1}{N}\sum_{\tau=1}^{\tau=t-1} \sum_{j=1}^{j=N} \bold{r}^j_\tau$. This is trivially satisfied by the original $Q_t$ update equation (\ref{Dist-Sarsa}).

Now, let's rewrite \eqref{eq-intheorem} for a specific state-action pair $(i,\pi[i])$. Without loss of generality and for simplicity, assume that only one of the agents, $k$, observes a new update for this pair at time $t$. Then, by letting $\epsilon^i$ denote the element value of $\bold{\varepsilon}^i$,  we have: 
\begin{align}
&\hat{Q}_{t+1}^i(i,\pi[i]) =\hat{Q}_{t}^i(i,\pi[i])+ \sum_{j=1}^{j=N} \alpha^j_t r^j_t+\epsilon^i 
\end{align}
which results in (\ref{eq:theorem2}) after substituting the definition of $r^j_t$.
\end{proof}
\hspace{0.25cm} Next, we present an existing stochastic stability result from \cite{StochBound} that will be used.
\begin{theorem}\label{Theo-StochasticLyap} 
Suppose that $x_t$ is generated by
\begin{align}
x_{t+1}=h(x_t)+l(x_t)w_t,
\end{align}
where $x_t \in \mathbb{R}^n$, and $w_t \in \mathbb{R}^m$ is a sequence of uncorrelated normalized Gaussian random variables. If there exists a function $\mathbb{V}(\Omega_t)$ that satisfies
\begin{itemize}
\item[1)] For a positive $c$, we have $\mathbb{V}(\Omega_t) \geq c\norm{\Omega_t}, \forall \Omega_t.$
\item[2)] $\mathbb{E}\big[\mathbb{V}(\Omega_{t+1}) |\mathbb{V}(\Omega_t)\big]-\mathbb{V}(\Omega_{t}) \leq k'-k''\mathbb{V}(\Omega_{t}), \forall \Omega_{t}, k'>0,$ and $0 <k''\leq 1$.
\end{itemize}
Then, $\mathbb{E}[\norm{\Omega_{\infty}}] \leq \frac{k'}{c k''}$.
\end{theorem}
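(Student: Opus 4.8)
The plan is to read hypothesis~2) as a ``stochastic supermartingale with bounded positive drift'' inequality --- the discrete-time stochastic analogue of an input-to-state-stable Lyapunov decrease --- and to reduce the whole statement to an elementary scalar recursion. Note first that the underlying dynamics $x_{t+1}=h(x_t)+l(x_t)w_t$ and the Gaussianity of $w_t$ play no role in the proof of the theorem itself; they matter only when one verifies hypotheses~1) and~2) for a concrete choice of error variable $\Omega_t$ (built from $x_t$) and of $\mathbb{V}$. So I would fix such an $\Omega_t$ and work purely from the two hypotheses. Also, hypothesis~1) with $c>0$ and $\norm{\cdot}\geq 0$ forces $\mathbb{V}(\Omega_t)\geq 0$, so every expectation below is a well-defined nonnegative number once the implicit finiteness $\mathbb{E}[\mathbb{V}(\Omega_0)]<\infty$ is assumed.

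Next I would pass from the conditional inequality to an unconditional scalar recursion. Rearranging hypothesis~2) gives $\mathbb{E}\big[\mathbb{V}(\Omega_{t+1})\mid \mathbb{V}(\Omega_t)\big]\leq k'+(1-k'')\mathbb{V}(\Omega_t)$; taking an outer expectation and using the tower property $\mathbb{E}[\mathbb{V}(\Omega_{t+1})]=\mathbb{E}\big[\mathbb{E}[\mathbb{V}(\Omega_{t+1})\mid \mathbb{V}(\Omega_t)]\big]$, and writing $u_t\defeq \mathbb{E}[\mathbb{V}(\Omega_t)]$, yields
\[
u_{t+1}\ \leq\ k' + (1-k'')\,u_t ,\qquad 0\leq 1-k''<1 .
\]
Unrolling this recursion --- a one-line discrete Gr\"onwall / geometric-series step --- gives
\[
u_t\ \leq\ (1-k'')^{t}u_0 + k'\sum_{j=0}^{t-1}(1-k'')^{j}
       \ =\ (1-k'')^{t}u_0 + \frac{k'}{k''}\bigl(1-(1-k'')^{t}\bigr),
\]
so letting $t\to\infty$ and using $(1-k'')^{t}\to 0$ (trivial when $k''=1$) we obtain $\limsup_{t\to\infty}\mathbb{E}[\mathbb{V}(\Omega_t)]\leq k'/k''$. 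Finally, hypothesis~1) gives $c\,\mathbb{E}[\norm{\Omega_t}]\leq \mathbb{E}[\mathbb{V}(\Omega_t)]=u_t$ for every $t$, hence $\limsup_{t\to\infty}\mathbb{E}[\norm{\Omega_t}]\leq k'/(c\,k'')$, which is the asserted bound once $\mathbb{E}[\norm{\Omega_\infty}]$ is read as this steady-state ($\limsup$) value.

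I do not expect a genuine analytic obstacle --- the argument is elementary once phrased this way --- so the ``hard part'' is really bookkeeping. Three points need care: (i) the conditioning in hypothesis~2) is on $\mathbb{V}(\Omega_t)$ rather than on the full filtration generated by $w_0,\dots,w_{t-1}$, so I should note that $\sigma(\mathbb{V}(\Omega_t))$ is a bona fide sub-$\sigma$-algebra and the tower property still applies; (ii) ``$\mathbb{E}[\norm{\Omega_\infty}]$'' must be defined precisely, as $\limsup_t\mathbb{E}[\norm{\Omega_t}]$ (or a subsequential limit), since no almost-sure convergence of $\Omega_t$ is claimed; and (iii) the finiteness $\mathbb{E}[\mathbb{V}(\Omega_0)]<\infty$ should be recorded, without which $u_0=\infty$ and the bound, though still vacuously true, is uninformative.
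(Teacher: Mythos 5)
Your argument is correct, but note that the paper itself does not prove this statement at all: Theorem~\ref{Theo-StochasticLyap} is imported verbatim as ``an existing stochastic stability result from \cite{StochBound}'' and is used as a black box in the proof of Theorem~\ref{proposition-bounded-optimality}. So there is no in-paper proof to compare against; what you have supplied is the standard self-contained derivation of this classical supermartingale-drift bound, and it is sound. Your key observations are exactly the right ones: the dynamics $x_{t+1}=h(x_t)+l(x_t)w_t$ and the Gaussianity of $w_t$ are irrelevant to the conclusion and only matter when verifying the two hypotheses for a concrete $\Omega_t$; rearranging hypothesis~2) to $\mathbb{E}[\mathbb{V}(\Omega_{t+1})\mid\mathbb{V}(\Omega_t)]\leq k'+(1-k'')\mathbb{V}(\Omega_t)$, taking outer expectations via the tower property, and unrolling the resulting scalar recursion gives $\limsup_t\mathbb{E}[\mathbb{V}(\Omega_t)]\leq k'/k''$, after which hypothesis~1) converts this into the stated bound on $\mathbb{E}[\norm{\Omega_\infty}]$ read as a $\limsup$. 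Your three bookkeeping caveats (conditioning on $\sigma(\mathbb{V}(\Omega_t))$ rather than the full filtration, the precise meaning of $\mathbb{E}[\norm{\Omega_\infty}]$, and the implicit requirement $\mathbb{E}[\mathbb{V}(\Omega_0)]<\infty$, which propagates to all $t$ by induction on the drift inequality) are genuine gaps in the theorem as stated and are handled correctly. One further point worth recording if this proof were inserted into the paper: the conclusion of Theorem~\ref{proposition-bounded-optimality} invokes this result with the bound written as $\frac{2k''}{k'}$, which is inconsistent with the $\frac{k'}{ck''}$ form your derivation (and the theorem statement) yields; your proof makes clear which form is the correct one.
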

\hspace{0.25cm} Finally, we use all these results to present a novel contribution, Theorem \ref{proposition-bounded-optimality} convergence of estimation error to a bound. 
\begin{theorem} \label{proposition-bounded-optimality}
Suppose that we have a stochastic difference equation
\begin{align} \label{Q-Updates-withError}
Q^i_{t+1}&(i,\pi[i])= Q^i_t(i,\pi[i]) +[\alpha^*(t)+\epsilon^k_\alpha]\\
& \times \big(\mathcal{R}_{\pi[i]}(i,j)+\gamma Q^k_t(j,\pi[j])-Q^k_t(i,\pi[i])\big)+\epsilon^i,\nonumber
\end{align}
where $|\epsilon^k_\alpha|\leq \delta_\alpha$ and $|\epsilon^i|\leq \delta_Q$ are stochastic scalars with known upper bound. Then, we have 
\begin{align*}
\lim_{t\to \infty}\mathbb{E}[\norm{Q^i_{t}(i,\pi[i])-Q^*_{t}(i,\pi[i])}]\leq \Delta,
\end{align*}
where $Q^*_{t}(\cdot)$ is the aggregated vector of the optimal $Q$-values at time $t$.
\end{theorem}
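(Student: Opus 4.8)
The plan is to recast \eqref{Q-Updates-withError} as a stochastic difference equation of the type treated in Theorem~\ref{Theo-StochasticLyap} and then check its two hypotheses. Set $\Omega_t \defeq Q^i_t - Q^*_t \in \re^{\sum_{k=1}^{n_q}|A(k)|}$ and take the Lyapunov function $\mathbb{V}(\Omega_t)\defeq\norm{\Omega_t}$, so hypothesis~1) of Theorem~\ref{Theo-StochasticLyap} holds with $c=1$; the target bound is then $\Delta = k'/(c k'')=k'/k''$, to be read off once $k',k''$ are identified.

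\emph{Reduction.} First I would collapse the coupled update to a componentwise recursion. By Theorem~\ref{corollary-DAT}, $\norm{Q^k_t-Q^i_t}\le\delta_Q$ for $t$ large, so every occurrence of $Q^k_t$ in \eqref{Q-Updates-withError} may be replaced by $Q^i_t$ at the cost of an additive term of order $\delta_Q$. Using the i.i.d.\ representation from the proof of Theorem~\ref{Theorem-adaptive-alpha} together with Assumption~\ref{sigma-timeinvariant}, write $\mathcal{R}_{\pi[i]}(i,j)+\gamma Q^*_t(j,\pi[j]) = Q^*_t(i,\pi[i])+\sigma(i,\pi[i])\eta_t$ with $\eta_t$ zero-mean, unit-variance and serially uncorrelated. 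Absorbing $\epsilon^i$, the $\delta_Q$-residual, the term $\epsilon^k_\alpha\sigma\eta_t$, and the nonstationary drift $Q^*_{t+1}-Q^*_t$ (bounded via Assumption~\ref{bound-changes}, of order $\Delta r_{\max}/(1-\gamma)$) into one perturbation $\xi_t$ with $\norm{\xi_t}\le\bar\delta$, the recursion reads, componentwise,
\begin{align*}
\Omega_{t+1}(i,\pi[i]) = (1-\alpha^k_t)\Omega_t(i,\pi[i]) + \gamma\alpha^k_t\,\Omega_t(j,\pi[j]) + \alpha^k_t\sigma(i,\pi[i])\eta_t + \xi_t(i,\pi[i]),
\end{align*}
where $\alpha^k_t\defeq\alpha^*(t)+\epsilon^k_\alpha$ and $j$ is the successor of $(i,\pi[i])$; in stacked form $\Omega_{t+1}=\big(I-D_t(I-\gamma P)\big)\Omega_t + D_t(\sigma\odot\eta_t)+\xi_t$ with $D_t$ diagonal ($\alpha^k_t$ on pairs refreshed at $t$, $0$ otherwise) and $P$ the row-substochastic successor-selection map, i.e.\ $x_{t+1}=h(x_t)+l(x_t)w_t$ with $w_t=\eta_t$.

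\emph{Verifying hypothesis~2).} Taking norms, using $\mathbb{E}\abs{\eta_t}\le1$, $\gamma\in(0,1)$, and the standard clamp $\alpha^k_t\in[0,1]$ (permitted by Corollary~\ref{remark-alpha-bound}), the row-sum bound $\norm{I-D_t(I-\gamma P)}_\infty\le 1-(1-\gamma)\alpha^k_t$ gives $\mathbb{E}[\mathbb{V}(\Omega_{t+1})\mid\mathbb{V}(\Omega_t)]\le\big(1-(1-\gamma)\alpha^k_t\big)\mathbb{V}(\Omega_t)+\norm{\sigma}+\bar\delta$. Since by \eqref{alphastar} each entry of $\alpha^*(t)$ equals $\Omega_t(i,\pi[i])^2/(\Omega_t(i,\pi[i])^2+\sigma(i,\pi[i])^2)\in[0,1)$, it need not be bounded away from $0$; I split by coordinate. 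On coordinates with $\abs{\Omega_t(i,\pi[i])}\ge\sigma(i,\pi[i])$ we have $\alpha^*(t)\ge\tfrac12$, hence (rescaling the reward as in Corollary~\ref{remark-alpha-bound} so $\delta_\alpha<\tfrac12$) $\alpha^k_t\ge\tfrac12-\delta_\alpha>0$ and a genuine contraction factor $\le 1-k''$ with $k''\defeq(1-\gamma)(\tfrac12-\delta_\alpha)$; on the remaining coordinates $\abs{\Omega_t(i,\pi[i])}<\sigma(i,\pi[i])$, so their contribution to $\mathbb{V}(\Omega_{t+1})$ is already $O(\norm{\sigma}+\bar\delta)$ and is absorbed into the constant $k'\defeq 2\norm{\sigma}+\bar\delta$. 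This yields $\mathbb{E}[\mathbb{V}(\Omega_{t+1})\mid\mathbb{V}(\Omega_t)]-\mathbb{V}(\Omega_t)\le k'-k''\mathbb{V}(\Omega_t)$; when a pair is not refreshed at step $t$ (so its coordinate only drifts) the inequality is applied instead to the $\mathbf b$-step composed map, which contracts every coordinate by periodic strong connectivity and the fact that each pair is eventually observed (Assumption~\ref{conn-graph}(I)). Theorem~\ref{Theo-StochasticLyap} then gives $\mathbb{E}[\norm{\Omega_\infty}]\le k'/(ck'')=\Delta$, proving the claim, with $\Delta$ of order $\big(\norm{\sigma}+\delta_Q+\Delta r_{\max}/(1-\gamma)\big)/\big((1-\gamma)(\tfrac12-\delta_\alpha)\big)$.

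\emph{Main obstacle.} The delicate step is this uniform-contraction argument: the adaptive rate $\alpha^*(t)$ degenerates to $0$ exactly when $\Omega_t$ is near the optimum, so no single lower bound on the contraction holds over all of state space, and $k'$ must be enlarged just enough to swallow the weak-contraction coordinates, while $\epsilon^k_\alpha$ must be kept small enough (via the reward-scaling freedom of Corollary~\ref{remark-alpha-bound}) that $\alpha^k_t$ stays in $[0,1]$ and never flips the sign of the correction. A secondary subtlety is bounding $\norm{Q^*_{t+1}-Q^*_t}$ inside $\bar\delta$ through Assumption~\ref{bound-changes}, rather than assuming $Q^*$ stationary, which is precisely what makes the result a genuine tracking statement rather than convergence to zero error.
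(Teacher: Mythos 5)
Your proposal is correct in outline and rests on the same master tool as the paper --- the stochastic Lyapunov drift result of Theorem~\ref{Theo-StochasticLyap} --- but the way you verify its second hypothesis is genuinely different. The paper takes $\mathbb{V}(\Omega_t)=\mathbb{E}[\mathcal{L}(Q^i_t)]=\tfrac12\big((\Omega_t)^2+\sigma^2\big)$ for a \emph{single} refreshed state--action pair, substitutes $\alpha^*_t=(\Omega^k_t)^2/((\Omega^k_t)^2+\sigma_t^2)$ exactly, and expands the drift into a quartic polynomial in $\Omega^k_t$; negativity is then forced by making the leading coefficient $[(\epsilon^k_\alpha)^2+k'-1]$ negative (using $\delta_\alpha<1$ from Corollary~\ref{remark-alpha-bound}) and driving the constant term arbitrarily negative, yielding $\Delta=2k''/k'$. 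You instead take $\mathbb{V}(\Omega_t)=\norm{\Omega_t}$ on the stacked vector, keep the successor-state coupling as an explicit substochastic map $\gamma P$, and obtain a contraction factor $1-(1-\gamma)\alpha^k_t$ by a row-sum bound; the degeneracy of $\alpha^*$ near the optimum, which the paper's polynomial expansion handles implicitly, you handle by splitting coordinates at $\abs{\Omega_t}\gtrless\sigma$ and absorbing the weak-contraction coordinates into $k'$. Your route buys a more interpretable bound (the roles of $1-\gamma$, $\norm{\sigma}$, $\delta_Q$, and the drift of $Q^*_t$ are all visible, and you are the only one to account for $Q^*_{t+1}-Q^*_t$ explicitly, which is what makes the statement a tracking result), at the price of needing $\delta_\alpha<\tfrac12$ rather than $\delta_\alpha<1$ and of deferring the unrefreshed-coordinate issue to an asserted $\mathbf{b}$-step composition; the paper's route avoids any case split by exploiting the exact form of $\alpha^*$, but sidesteps both the asynchronicity and the nonstationarity of $Q^*_t$ by analyzing only the pair that is updated at time $t$ under the i.i.d.\ observation model $v_t=Q^*_t+\sigma\eta$. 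Neither gap is fatal to either argument, and your decomposition is a legitimate, arguably more transparent, alternative.
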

\begin{proof}
Here, using Theorem \ref{Theo-StochasticLyap}, we will show that $\mathbb{E}[\Omega_\infty]$ remains bounded, where $\Omega^i_t=Q^i_t(i,{\pi[i]})-Q_t^*(i,{\pi[i]})$. 
We define $\mathbb{V}(\Omega^i_t)=\mathbb{E}\big[ \mathcal{L}(Q^i_t(i,{\pi[i]}))]\geq c\norm{\Omega^i_t}, \forall \Omega^i_t$ and for a positive constant $c>0$. Hence, the first condition in Theorem \ref{Theo-StochasticLyap} is satisfied. Now, using \eqref{Q-Updates-withError} and \eqref{Loss}, we rewrite $\mathbb{E}\big[\mathbb{V}(\Omega^i_{t+1}) |\mathbb{V}(\Omega^i_t)\big]$ as
\begin{align*}
&\mathbb{E}\big[\mathbb{V}(\Omega^i_{t+1}) |\mathbb{V}(\Omega^i_t)\big]=\mathbb{E}\bigg[\mathbb{E}\big[ \mathcal{L}\big(Q^i_{t+1}(i,{\pi[i]})\big)] | Q^i_{t}(i,{\pi[i]}))\bigg] 
\\ 
&= \frac{1}{2}\bigg[ \big( {\Omega^i_t}- [\alpha^*_t+\epsilon^k_\alpha] {\Omega^k_t} \big)^2
+{[\alpha^*_t+\epsilon^k_\alpha]}^2 {\sigma_t}^2(i,{\pi[i]})\nonumber\\
&+{\sigma_{t+1}}^2(i,{\pi[i]})+({\epsilon^i})^2 +2{\epsilon^i}\big(\Omega^i_t-[\alpha^*_t+\epsilon^k_\alpha]\Omega^k_t  \big) \bigg]
\end{align*}
We define $\epsilon^c_t = Q^i_t(i,{\pi[i]})-Q^k_t(i,{\pi[i]})$, equivalent to $\epsilon^c_t = \Omega^i_t-\Omega^k_t$. Note that the result in Theorem~\ref{corollary-DAT} holds for all agents; hence, $\abs{\epsilon^c_t} \leq \delta_Q$. Now, setting $\Omega^i_t = \Omega^k_t + \epsilon^c_t$ results in:
\begin{align*}\small
&\mathbb{E}\big[\mathbb{V}(\Omega^i_{t+1}) |\mathbb{V}(\Omega^i_t)\big] =\\
&\hspace{0.5cm}\frac{1}{2}\bigg[ \big( [1- \alpha^*_t-\epsilon^k_\alpha] {\Omega^k_t} +\epsilon^c_t\big)^2 +{[\alpha^*_t+\epsilon^k_\alpha]}^2 {\sigma_t}^2(i,{\pi[i]})\nonumber\\
&\hspace{0.5cm}+{\sigma_{t+1}}^2(i,{\pi[i]})+({\epsilon^i})^2 +2{\epsilon^i}\big(\Omega^k_t+\epsilon^c_t-[\alpha^*_t+\epsilon^k_\alpha]\Omega^k_t  \big) \bigg]
\end{align*}
Now, by using \eqref{alphastar}, we can set $\alpha_t^* = \frac{(\Omega^k_t)^2}{(\Omega^k_t)^2 + \sigma_t^2}$:
\begin{align*}
&\mathbb{E}\big[\mathbb{V}(\Omega^i_{t+1}) |\mathbb{V}(\Omega^i_t)\big] =\\
&\hspace{0.5cm}\frac{1}{2}\bigg[\frac{1}{(\Omega^k_t)^2 + \sigma_t^2} \big( (\Omega^k_t)^2(\epsilon^c_t + \epsilon^i - \epsilon^k_{\alpha}\Omega^k_t)^2) + \\ 
&\hspace{0.5cm}[(\epsilon^c_t + \epsilon^i)^2 - 2(\epsilon^c_t + \epsilon^i)(\epsilon^k_{\alpha} - 1)\Omega^k_t + \\
&\hspace{0.5cm}(1+2(\epsilon^k_{\alpha})^2) (\Omega^k_t)^2]\sigma_t^2 + (\epsilon^k_{\alpha})^2 \sigma_t^4 \big) + \sigma_{t+1}^2 \bigg]
\end{align*}
To have the second condition in Theorem \ref{Theo-StochasticLyap}, we need to show that there exist two constants $k''>0,$ and $0 <k'\leq 1$, such that $\mathbb{E}\big[\mathbb{V}(\Omega^i_{t+1}) |\mathbb{V}(\Omega^i_t)\big]-\mathbb{V}(\Omega_{t}) +k'\mathbb{V}(\Omega_{t})-k''\leq 0, \forall \Omega_{t}$. Now, we use Assumption \ref{sigma-timeinvariant} and group terms of $\Omega^k_t$:
\begin{align*}
&\mathbb{E}\big[\mathbb{V}(\Omega^i_{t+1}) |\mathbb{V}(\Omega^i_t)\big]+(k'-1)\mathbb{V}(\Omega^i_{t})  -k'' = \\
&\hspace{0.5cm}(\Omega^k_t)^4[(\epsilon^k_\alpha)^2 + k' - 1] + \\
&\hspace{0.5cm}(\Omega^k_t)^3[2\epsilon^c_t - 2\epsilon^c_t\epsilon^k_{\alpha} - 2\epsilon^i\epsilon^k_\alpha + 2\epsilon^c_tk'] + \\
&\hspace{0.5cm}(\Omega^k_t)^2[(2\epsilon^c_t\epsilon^i + (\epsilon^i)^2 + (\epsilon^c_t)^2k' - 2k'')+\sigma_t^2(2(\epsilon^k_\alpha)^2+2k')] + \\
&\hspace{0.5cm}(\Omega^k_t)^1[\sigma_t^2(2\epsilon^i - 2\epsilon^c_t\epsilon^k_\alpha - 2\epsilon^i\epsilon^k_\alpha + 2\epsilon^c_tk')] + \\
&\hspace{0.5cm}(\Omega^k_t)^0[\sigma_t^2(2\epsilon^c_t\epsilon^i+(\epsilon^i)^2+(\epsilon^c_t)^2k' - 2k'') + \sigma_t^4((\epsilon^k_\alpha)^2+k')]
\end{align*}
Now, by applying known bounds to error terms, $\epsilon$, we can find an upper bound for each of the coefficients. It is easy to observe that a negative coefficient on the leading term will lead to a negative result at large $\Omega_k^t$ and, if the zeroth order term can be made negative with arbitrarily large magnitude, the zeroth order term can shift the polynomial negative such that the expression remains negative during the transient response of the first, second, and third order terms before the fourth order dominates. We consider the upper bound of the fourth term: 
\begin{align*}
[(\epsilon^k_\alpha)^2 + k' - 1] \leq [\delta_a^2 + k'-1] 
\end{align*}
With Corollary~\ref{remark-alpha-bound} that $0 < \delta_a < 1$, it is easy to see that $k'$ can be selected such that 4th order coefficient is always negative and $0 < k' \leq 1$. Now, we consider the zeroth order term upper bound:  
\begin{align*}
[\sigma_t^2(2\epsilon^c_t\epsilon^i+(\epsilon^i)^2+(\epsilon^c_t)^2k' - 2k'') + \sigma_t^4((\epsilon^k_\alpha)^2+k')] \leq \\
[\sigma_t^2(3\delta_Q^2+(\delta_Q)^2k') + \sigma_t^4((\delta_a)^2+k')- 2\sigma_t^2k'']
\end{align*}
It is easy to see that an arbitrarily large $k''$ drives the upper bound of the zeroth term arbitrarily largely negative such that $k'' > 0$. Therefore, the conditions in Theorem \ref{Theo-StochasticLyap} are satisfied, and $\mathbb{E}[\norm{\Omega_{\infty}}] \leq \frac{2k''}{k'}$.
Thus, we have $\lim_{t\to \infty}\mathbb{E}[\norm{Q^i_{t}(i,\pi[i])-Q^*_{t}(i,\pi[i])}]\leq \Delta$, which completes the proof.
\end{proof}
\section{Distributed Local Task Assignment} \label{Sec:game}
	Once the agents have a Q-value table of optimal policies, the agents must coordinate to assign tasks uniquely in order to maximize the profit of a company. We propose a distributed method using a potential game and binary log-linear learning, as shown in green in the right side of Fig.~\ref{Schematic}. We use a distributed framework game-based method that is compatible the distributed SARSA RL estimation of optimal policies presented in the previous section. There are various algorithms in the literature to solve the task assignment problem. The well-known Hungarian method \cite{kuhn, Edmonds}, auction based methods \cite{Bertsekas1988}, and parallel algorithms \cite{Bertsekas}, and their applications in multi-robot target and task assignment \cite{Schumacher, Jin, Bellingham} can be employed to solve our problem formulation. However, these algorithms are mainly designed to solve the assignment problem in a centralized manner. In our framework, number of customer requests, $m_c$, and number of agents, $N$, can be large numbers; hence, it might not be feasible to pass all information at each time step to a command center that could process the information. Furthermore, the complexity of the overall system makes the problem of constructing a centralized optimal policy computationally heavy or even intractable. Some decentralized methods have been introduced in literature to tackle this problem \cite{morgan2016swarm,Dionnedec, sujitdec, How2009}. In \cite{morgan2016swarm}, a distributed auction-based algorithm is introduced, where the task assignment problem is solved in a distributed manner. In~\cite{How2009}, the consensus algorithm is employed to find the centralized solution in a distributed manner. However, by using this approach, the size of the problem is not reduced, and only the requirement for having a central node is relaxed. As a result, the algorithm for a large number of customers and agents becomes intractable.    
\subsection{Game Design} \label{game-design}
	In this section, we present a game-based local interaction among agents to select their customers in a distributed manner. In particular, we consider a problem with $m_c$ customer requests and $N$ agents available. The agents can only see requests and other agents, if they are in a range. 
\begin{definition}\label{Tasks}
The pick-up and delivery task is denoted by $T(c^j_p, c^l_d)$. The task is completed when the agent picks up the customer from the pick-up point $c^j_p \in j$ and delivers them to destination $c^l_d \in  l$. Note that the terms action and task are used interchangeably in this section.
\end{definition}
\begin{assumption} \label{AvailableTasks}
Each agent is aware of any pick-up requests within radius, $r_c$, from its current position, $p^i_t$. In other words, the tasks available for agent $i$ are denoted by the set $U^i_t=\{T(c^j_p, c^l_d)\mid  \ \norm{p^i_t-c^j_p}<r_c\}$. 
\end{assumption}
\begin{assumption} \label{Communication}
Each agent is able to communicate with its neighbors to exchange information. The set of neighbors of agent $i$ is given by $N_i^{\text{comm}} (t):=\{j|\norm{p^i_t-p^j_t}_2 \leq R_i^\text{comm}\},$ where $R_i^\text{comm}$ is the communication range of agent $i$.
\end{assumption}

We require $R_i^\text{comm}$ to be larger than or equal to $2r_c$. That is, when the agents have an action set intersection, they can communicate with each other. The agent's action at time step $t$ is denoted by $u^i_t$, where $u^i_t\in U^i_t$ and $U^i_t$ is the available action set for agent $i$ defined in Assumption \ref{AvailableTasks}. The action profile of all agents is  denoted by $u_t =(u^1_t,...,u^N_t)\in U_t:=\prod_{i=1}^N U^i_t$.

Now, we propose a non-cooperative game to solve the task assignment problem in a distributed fashion. First, we design a potential game. To formulate our task assignment problem as a game, we design a utility function, $J^i$, that aims to capture an action's marginal contribution on the company's profit, for each agent $i$. 
\begin{align}
J^i (u_t)&=H( \{u^i_t\} \cup u^{-i}_t)- H( \{u^i_0\} \cup u^{-i}_t)  \label{1} \\
 H(u_t)&=\sum_{i=1}^N h(u^i_t),\\
 h(u^i_t)&=Q(j,a_l)+\mathcal{R}_{l}(j,l)-C\norm{p^i_t-c^j_p}, \label{futilty}
\end{align} for $u^i_t=T(c^j_p, c^l_d)$, where $c^j_p \in j$ and $c^l_d \in l$. Also, $u^i_0$ is the null action of agent $i$, and $u^{-i}_t$ denotes the actions of all agents other than agent $i$. $C\norm{p^i_t-c^j_p}$ is the cost for agent $i$ moving from vehicles  current position, $p^i_t$ to the pick-up location $c^j_p$, where $C$ is a design constant. 

The marginal contribution, $J^i$, is sometimes referred to as the Wonderful Life Utility (WLU)\cite{wonderful}.
Note that the utility function $J^i$ is local for agent $i$ over the region defined in Assumption \ref{AvailableTasks}. Note that $J^i$ is dependent only on the actions of $\{i\}\cup N_i^{\text{sense}}(t)$, where $N_i^{\text{sense}}(t)=\{j\mid U^i_t \cap U^j_t \neq \emptyset \} $. This means that agent $i$ can calculate $J^i$ while only  knowing the actions of the agents whose available action sets have an intersection with that of agent $i$. As mentioned before, by setting $R^\text{comm}\geq 2r_c$, if $j \in N_i^{\text{sense}}(t)$, it follows that $j \in N_i^\text{comm} (t)$.
That is, when the agents have an action set intersection, they can communicate with each other.
Hence, there is no need for an agent to know the actions of the agents that do not have an action set intersection with it. This makes the defined utility function local. 
Our potential game is defined as follows.
\begin{lemma} \label{lem1}
The assignment game $ \Upsilon := \left\langle N, U, J \right\rangle $, where $J= \{J^i, i=1,..., N\}$ with $J^i$ given by \eqref{1}, is a potential game with the potential function $\Phi$:
\begin{align}\label{phi1} \small
\begin{split}
 \Phi(u_t)&=H(u_t). \normalsize
\end{split}
\end{align}
\end{lemma}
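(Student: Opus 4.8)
The plan is to verify directly that the utility functions $J^i$ defined in \eqref{1} are aligned with the claimed potential $\Phi = H$ in the sense required by the definition of a potential game. Recall that an exact potential game requires that for every agent $i$, every action profile $u^{-i}_t$ of the other agents, and every pair of actions $u^i_t, \tilde{u}^i_t \in U^i_t$, the change in agent $i$'s utility equals the change in the potential:
\begin{align*}
J^i(\{u^i_t\}\cup u^{-i}_t) - J^i(\{\tilde{u}^i_t\}\cup u^{-i}_t) = \Phi(\{u^i_t\}\cup u^{-i}_t) - \Phi(\{\tilde{u}^i_t\}\cup u^{-i}_t).
\end{align*}

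First I would expand the left-hand side using the definition $J^i(u_t) = H(\{u^i_t\}\cup u^{-i}_t) - H(\{u^i_0\}\cup u^{-i}_t)$. Substituting, the difference $J^i(\{u^i_t\}\cup u^{-i}_t) - J^i(\{\tilde{u}^i_t\}\cup u^{-i}_t)$ becomes
\begin{align*}
\Big(H(\{u^i_t\}\cup u^{-i}_t) - H(\{u^i_0\}\cup u^{-i}_t)\Big) - \Big(H(\{\tilde u^i_t\}\cup u^{-i}_t) - H(\{u^i_0\}\cup u^{-i}_t)\Big),
\end{align*}
and the two copies of the baseline term $H(\{u^i_0\}\cup u^{-i}_t)$ cancel. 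This leaves exactly $H(\{u^i_t\}\cup u^{-i}_t) - H(\{\tilde u^i_t\}\cup u^{-i}_t)$, which is $\Phi(\{u^i_t\}\cup u^{-i}_t) - \Phi(\{\tilde u^i_t\}\cup u^{-i}_t)$ by \eqref{phi1}. This establishes the potential-game property.

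I would also remark briefly on why $\Phi = H$ is a legitimate (well-defined, finite) potential: since $H(u_t) = \sum_{j=1}^N h(u^j_t)$ with $h$ given by \eqref{futilty}, it depends only on the agents' chosen tasks and the finite Q-value table, so it is a bounded function on the finite joint action set $U_t$. I would note that the cancellation argument only uses the additive structure of the marginal-contribution definition and does not require any property of $h$ beyond being a fixed function of the action profile; in particular the locality observations made just before the lemma statement (that $J^i$ depends only on neighbors in $N_i^{\text{sense}}(t)$) are not needed for the potential-game claim itself, though they are what make the game implementable in a distributed fashion.

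Honestly, there is no real obstacle here: the result is the standard fact that the Wonderful Life Utility / marginal-contribution (Shapley-type) construction yields an exact potential game with potential equal to the global objective, and the proof is the one-line telescoping cancellation above. The only thing to be careful about is bookkeeping — making sure that the baseline action $u^i_0$ is the \emph{same} null action in both terms so that it genuinely cancels, which it is by the definition of $J^i$ in \eqref{1}. I would present the computation cleanly in a single aligned display and conclude.
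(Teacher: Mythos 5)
Your proof is correct and follows essentially the same route as the paper: the paper states the defining identity $\Phi(u'^i_t,u^{-i}_t)-\Phi(u_t)=J^i(u'^i_t,u^{-i}_t)-J^i(u_t)$ and asserts it is ``easy to see'' from \eqref{1} and \eqref{phi1}, which is precisely the telescoping cancellation of the baseline term $H(\{u^i_0\}\cup u^{-i}_t)$ that you carry out explicitly. Your version simply spells out the one-line computation the paper leaves implicit.
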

\begin{proof}
A potential game has to satisfy
\begin{equation}\label{proooof0}
\begin{split}
 \Phi \big( {u'}^i_t,& u^{-i}_t\big) - \Phi \big(u_t \big)   = J^i\big({u'}^i_t,u^{-i}_t\big)- J^i \big(u_t\big)
\end{split}
\end{equation}
for any agent $i =1,..., N$ and action ${u'}^i_t \in U^i_t$.
According to (\ref{1}) and (\ref{phi1}), it is easy to see that (\ref{proooof0}) holds.
\end{proof}
\subsection{Game Theory Extension: Ridepooling Utility} \label{SubSec:ridesharing}
	Here, we design a new utility function for our task assignment game, where ridepooling is also considered.
	It is assumed that the vehicle can only service two customers at the same time. This assumption holds for both UberPool and LyftLine, where ridepooling option is offered to customers.
    Instead of using Assumption \ref{AvailableTasks}, the available tasks for each agent is defined with Assumption \ref{AvailableTasks-RideShare}.
\begin{assumption} \label{AvailableTasks-RideShare}
Available tasks for agent $i$ are denoted by:
 $\bar{U}^i_t=\{[(T(c^j_p, c^l_d), T(c^{j'}_p, c^{l'}_d)] \mid \forall T(c^j_p, c^l_d), T(c^{j'}_p, c^{l'}_d) \in U^i_t\}$.
\end{assumption}
\hspace{0.25cm} The set of available tasks in Assumption \ref{AvailableTasks-RideShare} contains all possible coupled customers tasks, including having only one customer. Now, we define the utility function, $h(u^i_t), u^i_t\in \bar{U}^i_t$, for ride-pooling as:	
\begin{align} \label{futiltyshared}
& h(u^i_t)= \exp^{-C' \beta} \nonumber\bigg[ Q(k,a_{\mathsf{d}_2})+\mathcal{R}_{l} (j,l) \\&
\hspace{3cm}+\mathcal{R}_{l'} (j',l') -C\norm{p^i_t-c^{\mathsf{p}_1}_p}\bigg],\nonumber \\&   \mathsf{p}_1=\arg \min_{o}\norm{p^i_t-c^o_p}, \ o\in \{j,j'\},\mathsf{p}_2= \{j,j'\}\backslash \{\mathsf{p}_1\}, \nonumber\\
&\mathsf{d}_1= \arg \min_{o} \norm{c^{\mathsf{p}_2}_p -c^o_d}, \ o \in \{l,l'\}, \mathsf{d}_2= \{l,l'\}\backslash \{\mathsf{d}_1\}, \nonumber \\
&\text{Path}_{\min}=\norm{p^i_t-c^{\mathsf{p}_1}_p}+ \norm{c^{\mathsf{p}_1}_p-c^{\mathsf{p}_2}_p} \nonumber
 \\& \hspace{3cm}+\norm{c^{\mathsf{p}_2}_p-c^{\mathsf{d}_1}_d}+\norm{c^{\mathsf{d}_1}_d-c^{\mathsf{d}_2}_d}, \nonumber\\
& \beta=\frac{\text{Path}_{\min}}{\min\{ \norm{c^j_p-c^{l}_d},\norm{c^{j'}_p-c^{l'}_d}\}}-1,
\end{align}
where $\mathsf{p}_1$ and $\mathsf{p}_2$ are the first and second pick-up location indices, respectively. The first and second drop off indexes are denoted by $\mathsf{d}_1$ and $\mathsf{d}_2$, respectively. $\text{Path}_{\min}$ denotes the shortest path that agent must travel to accomplish its task, picking up and dropping off both customers, computed from the agent is current position. The cost of sharing the ride for two customers $T(c^j_p, c^l_d)$ and $T(c^{j'}_p, c^{l'}_d)$ is denoted by design parameter, $beta$, where $\beta\geq 0$ and a smaller $\beta$ indicates a better coupling. $C'>0$ is a positive constant to be selected, and it is assumed that $p^i_t \in k$. Now, replacing \eqref{futilty} with \eqref{futiltyshared}, a new utility function for ride-pooling can be calculated. The same reasoning holds, and it is easy to see that the game remains a potential game.

\subsection{Nash Equilibrium Convergence Using Binary Log-Linear Learning} \label{subsec:learning}
We need a distributed adaptation rule to converge to a Nash equilibrium defined in Sec. \ref{game-design}. The goal is that each agent can maximize its own utility function using these rules. Game theoretic reinforcement learning provides iterative algorithms to reach a Nash equilibrium\cite{robust5},\cite{robust7}.

Binary log-linear learning is a modified version of the log-linear learning for potential games, where only a single player updates its action at each iteration. The agents are allowed to explore and can select non-optimal actions but with relatively low probabilities. This plays an important role for agents to escape the suboptimal actions, and as a result the probability of finding a better Nash equilibrium is increased.
Binary log-linear learning can be used for varying available action sets. In \cite{revisiting}, it is shown that a potential game will converge to stochastically stable actions, where these actions are the set of potential maximizers if the feasibility and reversibility assumptions are satisfied on the agents available sets.
Binary log-linear learning is defined in our system as follows: At each time $t$, one agent $i$ is randomly selected and allowed to alter its current action, $u^i_t$, while all other agents repeat their actions, i.e., $u^{-i}_t = u^{-i}_{t - 1}$. The selected agent $i$ chooses a trial action $u'^i_t$ uniformly randomly from the available action set $U^i_t$. The player calculates, $J^i(u'^i_t, u^{-i}_{t - 1})$, the utility function for this trial action. Then agent $i$ changes its action according to the following distribution:
\begin{align} \label{log}
 P_i^{(u^i_{t - 1}, u^{-i}_{t - 1})} (t) &=\frac {\exp({\frac{1}{\tau}J^i(u_{t - 1})})}{{\exp({\frac{1}{\tau}J^i(u_{t - 1})})}+ {\exp({\frac{1}{\tau}J^i(u'^i_t, u^{-i}_{t - 1})}})},\nonumber \\
 P_i^{(u'^i_t, u^{-i}_{t - 1})} (t)&= \frac{\exp({\frac{1}{\tau}J^i(u'^i_t,u^{-i}_{t - 1})})}{{\exp({\frac{1}{\tau}J^i(u_{t - 1})})}+{\exp({\frac{1}{\tau}J^i(u'^i_t, u^{-i}_{t - 1})}})},\nonumber \\
\end{align}
where $P_i^{(u^{i}_{t - 1}, u^{-i}_{t - 1})} (t)$ denotes the probability of choosing action $u^i$ at time $t$ while other agents are repeating their action $ u^{-i}_{t - 1}$. Note that the sum of $P_i^{(u^i_{t - 1}, u^{-i}_{t - 1})} (t)$ and $P_i^{(u'^i_t, u^{-i}_{t - 1})}$ is 1, so the probability of taking an action other than $u^i_{t - 1}$ or $u'^i_t$ is zero. The coefficient $\tau > 0$ is a design parameter specifying how likely agent $i$ chooses a suboptimal action, to specify the trade-off between exploration and exploitation. For $\tau \to \infty$, (exploration) the learning algorithm will choose the action $u^i_{t - 1}$ or $u'^i_t$ with an equal probability while for $\tau \to 0$, (optimality) it will choose the action which has the greatest utility function among the set $u^i_{t - 1}$ and $u'^i_t$.

\section{Simulation and Discussion} \label{Sec:sim}
To validate the proposed algorithms, we prepare simulations using taxi data provided by the city of Chicago, \cite{chicagoData}. The city is partitioned into $77$ cells (as shown in Fig.~\ref{chicago}). In provided data, each entry contains the pick-up and drop-off cells of the trip, time-stamp, duration and fare. We use data from May $2017$, which gives us approximately one million trips to analyze. In order to numerically compare the centralized and distributed SARSA RL algorithms, we use the respective learning update with the same game theory task assignment. First, we illustrate the algorithm at different time-steps, then we show convergence of our distributed SARSA RL algorithm to the centralized solution with two metrics: tracking estimated Q values and comparing total revenue of each algorithm. Finally, we show the economic advantage of learning a time-varying environment. Also, we provide an animation of the task assignment, provided in \url{https://youtu.be/T9DwK8-W6xI}.

The simulation procedure is shown in Algorithm 1. We choose the following standard design parameters: moving average constant $\zeta = 0.2$, exploration/exploitation constant $\tau = 0.5$, and cost to travel constant $C = 20$. 

\begin{figure}[t]
\begin{center} \hspace*{0cm}
\vspace{0.1cm} {\scalebox{0.6}{\includegraphics*{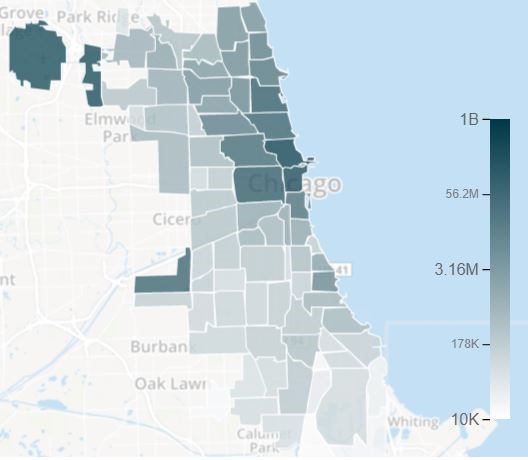}}}
\vspace{0cm} \caption{Grid map of city of Chicago with 77 cells . Color map corresponds to number of taxi trips.} \vspace{-0.3cm}
\label{chicago}\end{center}
\end{figure}

\begin{figure*}[t]
\begin{center} \hspace*{-0.05cm}
\vspace{0.1cm} {\scalebox{0.52}{\includegraphics*{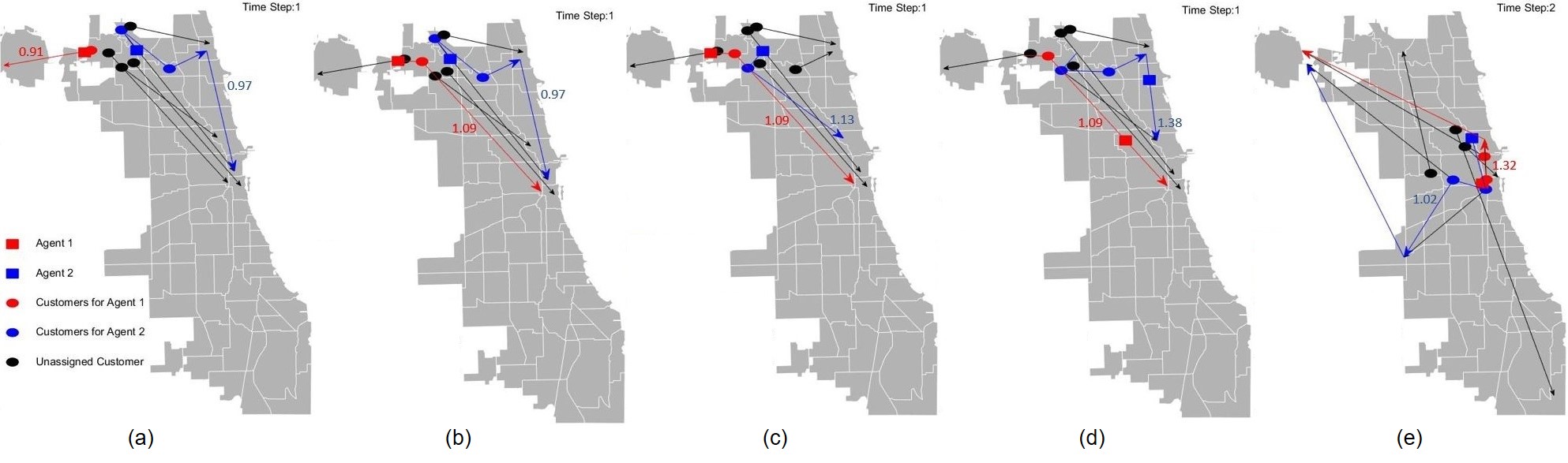}}}
\vspace{0cm} \caption{The task assignment algorithm proposed in Sec. \ref{Sec:game} is illustrated for two agents} \vspace{-0.3cm}
 \label{timeframe}\end{center}
\end{figure*}

\begin{algorithm} 
\caption{Simulation Procedure}
\begin{algorithmic}[1] 
\label{alg:sim_alg}
\State Initialize agents with ${Q}_t^i = {Q}_\mathrm{MDP}, {f}_t^i, {g}_t^i, {\omega}_t^i, \alpha_t^i$
\While{true}
\Statex \texttt{Information Seen by Agents}
\For{ $\forall$ agent $\in$ agents}
	\State Record tasks, $T(c_p^j, c_d^l)$, within $r_\mathrm{tasks}$ of agent
    \State Record agents within $R_{comm}$ of agent
	\EndFor
\Statex \texttt{Task Assignment}
\While{ $\neg  \forall  u_t^i$ converged}
 	\State Randomly select agent, $i$, with current action $u_t^i$
    \State Compute $J$ with \eqref{futilty}
    \State Compute $J_p$ with \eqref{futilty}
    \State Change action with probability: $P_i^{(u'^i_t, u^{-i}_{t - 1})}$, \eqref{log} 
    \EndWhile
\Statex \texttt{Learning Update}
\For{$\forall$ agent $\in$ agents}
	\State Update $r_t^i$ and $\boldsymbol{r_t^i}$ with \eqref{Dist-Sarsa}
    \State Update ${\omega}_t^i, \hat{{f}_t^i}, \hat{{g}_t^i}, \alpha_t^i$ with \eqref{Dist-alpha}. 
    \State Update $\boldsymbol{Q}_{t+1}^i$ with \eqref{Dist-Sarsa}. 
	\EndFor	
\State {Agents execute assigned tasks}
\EndWhile

\end{algorithmic} 
\end{algorithm}  

\indent We illustrate the algorithm in practice in Fig.~\ref{timeframe}. In (a)-(c), agents are iteratively running the algorithm to select their customers, where the marginal utility function $J^i$ for each selected customer is shown. In (d), agents are picking up and dropping off their selected customers with maximum utility function. In (e), agents are at their destinations after accomplishing their tasks and observing local new customers. Then, they execute the game theory task assignment again. 

\begin{figure}[t]
\begin{center} 
\vspace{-00cm}
{\scalebox{.95}
{\includegraphics[trim={2cm 6.25cm 2cm 7cm}, clip = true, width=9cm]{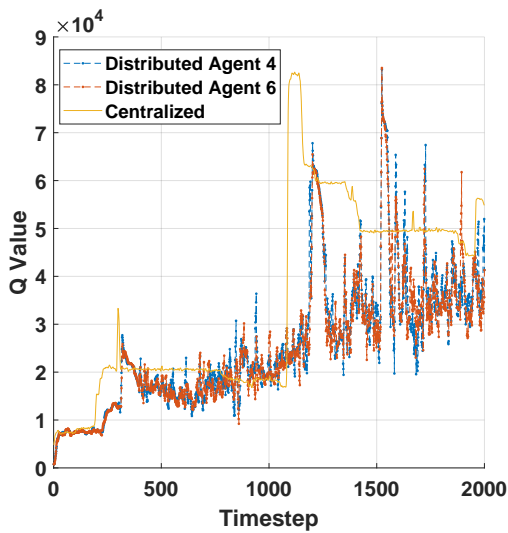}}}
\vspace{0cm}
\caption{The Q-values for a state-action pair tracking for centralized and distributed algorithms.} 
\label{Q-dis}
\vspace{-00cm}
\end{center}
\end{figure}

\indent The first validation of proposed distributed SARSA RL method is to compare distributed and centralized Q-value tracking. Figure~\ref{Q-dis} shows the Q-value, $Q(8,a^8_8)$. As expected, we see the distributed estimate approach the centralized algorithm's Q-value. It is also easy to see that agents estimates are converging and consensus is achieved. The non-zero error at large time-value is captured in our model by the error bound from Theorem \ref{proposition-bounded-optimality}, $\Delta$. This simulation is run with ten agents over the equivalent of two weeks. 

\indent The second validation of the proposed distributed SARSA RL method is to compare the total revenue generated by distributed and centralized policies, $D_\mathrm{Distributed}$ and $D_\mathrm{Centralized}$, respectively. The ratio of the generated revenue is plotted against number of agents in Fig.~\ref{TotalRevenue_fig} with varying radius of communication between agents. 

\begin{figure}[t]
\begin{center} 
\hspace{-0.5cm}
{\scalebox{1}
{\includegraphics[trim={2cm 7cm 2cm 7cm}, clip = true, width=9cm]{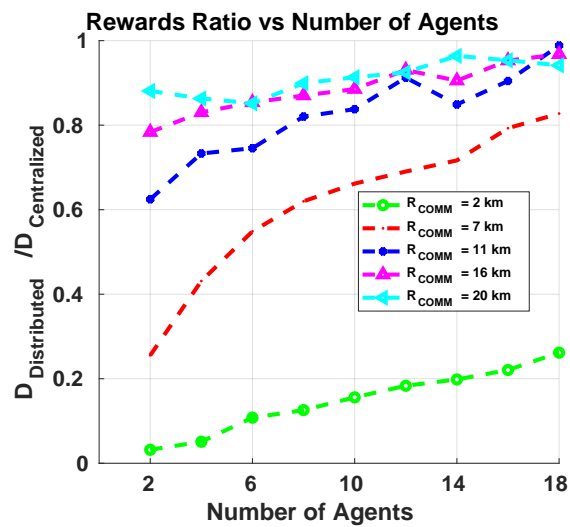} }}
\caption{Total revenue vs number of agents with varying communication radius. Each data point is a simulation.} 
\label{TotalRevenue_fig}
\end{center}
\end{figure}

\indent Figure~\ref{TotalRevenue_fig} reveals a few important effects. First, as the radius of communication increases, the revenue ratio approaches one. This corresponds to the effect that if every agent can communicate with every other agent (a complete graph), the distributed solution for every agent will converge to the centralized solution and we will recover a revenue ratio of one. The second effect we observe is that as the number of agents increases, the ratio approaches one.  All held  equal, number of agents increase would increase the estimation error from consensus. However in this case, increasing the number of agents decreases the estimation error, since the  connectivity  of  the graph is improving. So there are two competing  effects determining the performance relative to number of agents. The trends discussed are expected and validate our algorithm in  a numerical simulation. 

\begin{figure}[t]
\begin{center} 
{\scalebox{0.9}
{\includegraphics[trim={2cm 6.5cm 2cm 7cm}, clip = true, width=10cm]{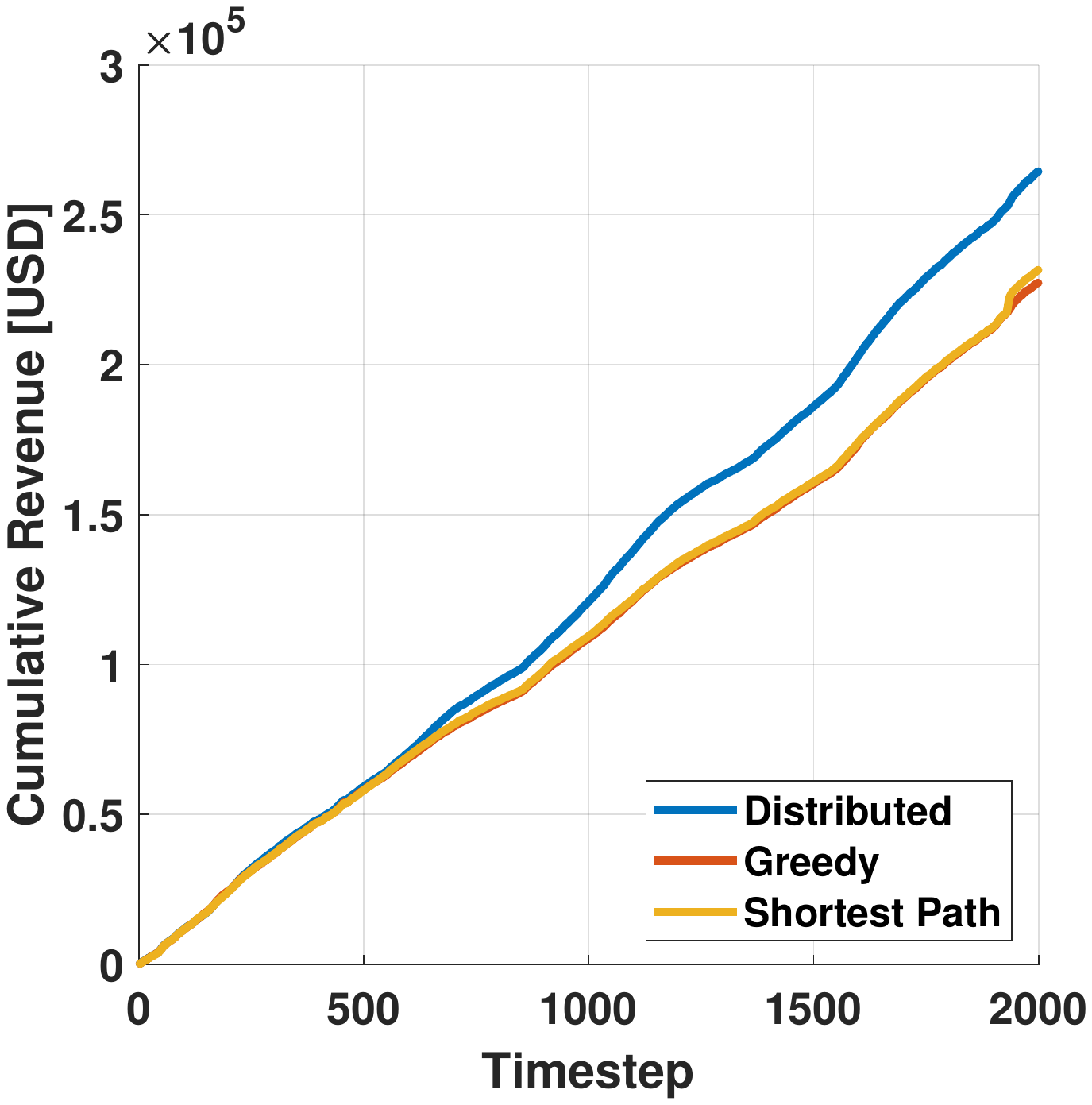} }}
\caption{Cumulative revenue at each time step for proposed algorithm, greedy, and shortest path. }
\vspace{-0.3cm}
\label{revenue_improvement_figure}
\end{center}
\end{figure}

\indent Figure~\ref{revenue_improvement_figure} demonstrates the economic utility of our proposed algorithm. This simulation is for 20 agents with a communication radius of 5.5 km. This is the cumulative reward of each algorithm. We define a 'greedy' algorithm where agents value each trip from the immediate reward. We also define a 'shortest path' algorithm where agents value each trip from the how close the request is to the agent's current location (current algorithm for most of transportation companies). Both these algorithms have no forecasting ability. At early time-steps, the distributed SARSA RL algorithm performs similarly to the 'greedy' and 'shortest path' algorithms, but outperforms these algorithms overtime because the agents are collectively updating information on a changing environment by estimating future values using the Q-value formulation. Over the equivalent of two weeks, the average return of each trip for the greedy algorithm is 10.52 USD and the average return for our distributed SARSA RL algorithm is 12.03 USD. 

Our method could be further improved by parallel advances in intelligent transportation. For example, \cite{coulomb_law} presents a physics-inspired method to increase net revenue by recommending routes for taxis with no requests by modeling the passengers and taxis as positive or negative charges.

\section{Conclusion}
In this paper, real-time distributed learning-based algorithms with guaranteed convergence properties were presented to solve the optimal transportation planning problem of autonomous vehicles for flexible-route service using autonomous taxis and ride-sharing vehicles. The proposed optimal traffic planning approach employs a distributed SARSA algorithm to allow each vehicle to use only local information and local interactions to update the Q-values. Those Q-values reflect the estimate of the company's profit for selecting different customers over a period of time. An MDP model was used to find the initial values for SARSA reinforcement learning to provide faster convergence and to guarantee a near-optimal policy before convergence.
	Furthermore, to capture the environment changes, such as the number of the customers in each area, traffic, and fares, an optimal adaptive learning rate is introduced for distributed SARSA updates. In a single agent scenario, the agent simply selects the customer with the largest value. However, in a multi-vehicle scenario, agents are required to reach an agreement on selected customers. Hence, a game-theory-based task assignment algorithm was presented, where each agent used the high-level recommendations, provided by distributed SARSA, to select the optimal customer from the set of local available requests in a distributed manner. It is proven that the introduced game is a potential game, and that agents converge to stochastically stable actions, a Nash equilibrium, if they all adhere to binary log-linear learning.
	Furthermore, a utility function was proposed to consider ride-pooling for customers, where it reduced the cost for customers and the number of required vehicles for the transportation company.
	Finally, the customers data provided by the city of Chicago was used to validate the proposed algorithms. It is shown that the proposed algorithm is highly scalable due to its distributed nature. The results of the numerical simulation validate the proposed algorithm by demonstrating that the complete graph distributed solution converges to the centralized solution. The economic utility of the algorithm is demonstrated to outperform the existing 'greedy' methods. 
    
\section*{Acknowledgment}
\textcolor{blue}{}
The authors thank the feedback from colleagues in the Data-driven Intelligent Transportation workshop (DIT 2018, held in conjunction with IEEE ICDM). The authors also thank Suzanne Olivier for her contributions on Fig.~\ref{timeframe} and the initial simulation design.}

\bibliographystyle{IEEEtran}
\bibliography{IEEEabrv,main}

\begin{appendices}
\section{} \label{Appendix-errorbound}
Using the MDP model defined in Sec.~\ref{SubSec:MDP-taxi} for a non-stationary environment creates an accumulated error at each time $t$.
To determine the upper bound  of this accumulated error, we need to introduce some definitions and assumptions.
	Here, we call the tuple $<S,A,\mathcal{P},\mathcal{R}>$, defined in Sec. \ref{SubSec:MDP-taxi}, as the stationary MDP. 
	The true model of the time-varying system at time $t$ is denoted by the tuple $<S,A,\{\mathcal{P}\}_{t=1}^\infty,\{\mathcal{R}\}_{t=1}^\infty>, \forall t$. Also, the optimal $Q$-value for stationary model and true time-varying model are denoted by $Q^*_{\text{MDP}}$ and $Q_t^*$, respectively.
    
\begin{definition} \label{k-approximate}
A sequence of random variable $X_t$, $\kappa-$approximates random variable $Y$ with $\kappa\geq 0$, if we have
\begin{align*}
\mathbb{P}\bigg(\lim_{t\to \infty} \sup \norm{X_t-Y}\leq \kappa \bigg)=1.
\end{align*}

\end{definition}

\begin{assumption}
There exist two constants $\epsilon$ and $\delta$ such that
\begin{align*}
\lim_{t\to \infty} \sup \norm{\mathcal{P}-\mathcal{P}_t} \leq \epsilon,\\
\lim_{t\to \infty} \sup \norm{\mathcal{R}-\mathcal{R}_t} \leq \delta .
\end{align*}
\end{assumption}

Now, based on the theorem introduced in \cite{changing}, we are able to obtain the upper bound error of using the stationary model while the system is changing at each time $t$.

\begin{theorem}\label{upperbound}
$Q_t$ sequence is  $\kappa$-approximate of $Q^*_{\text{MDP}}$, where 
\begin{align*}
\kappa=\frac{4 d(\epsilon,\delta)}{1-\gamma} \\
d(\epsilon,\delta)=\frac{\epsilon \gamma \norm{\mathcal{R}}_\infty}{(1-\gamma)^2}+\frac{\delta}{1-\gamma}
\end{align*}
\end{theorem}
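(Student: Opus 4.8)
The plan is to reduce Theorem~\ref{upperbound} to the contraction-mapping machinery that underlies the convergence of value iteration, applied simultaneously to the stationary Bellman operator and to the sequence of time-varying Bellman operators. First I would introduce, for each $t$, the Bellman optimality operator $\mathcal{T}_t$ associated with $<S,A,\mathcal{P}_t,\mathcal{R}_t>$ and the stationary operator $\mathcal{T}$ associated with $<S,A,\mathcal{P},\mathcal{R}>$, both acting on the space of $Q$-vectors in $\mathbb{R}^{\sum_k |A(k)|}$ equipped with the sup-norm $\norm{\cdot}_\infty$. Both are $\gamma$-contractions because $\gamma \in (0,1)$ is the discount factor and $\mathcal{P},\mathcal{P}_t$ are stochastic (this is exactly the well-formedness check performed after the MDP tuple is defined). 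Consequently $Q^*_{\text{MDP}}$ is the unique fixed point of $\mathcal{T}$ and $Q_t^*$ the unique fixed point of $\mathcal{T}_t$, and the quantity we must bound is $\limsup_{t\to\infty}\norm{Q_t^* - Q^*_{\text{MDP}}}_\infty$.

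The key intermediate step is a perturbation estimate for fixed points of nearby contractions. Writing $\norm{Q_t^* - Q^*_{\text{MDP}}}_\infty = \norm{\mathcal{T}_t Q_t^* - \mathcal{T} Q^*_{\text{MDP}}}_\infty \le \norm{\mathcal{T}_t Q_t^* - \mathcal{T}_t Q^*_{\text{MDP}}}_\infty + \norm{\mathcal{T}_t Q^*_{\text{MDP}} - \mathcal{T} Q^*_{\text{MDP}}}_\infty \le \gamma \norm{Q_t^* - Q^*_{\text{MDP}}}_\infty + \norm{(\mathcal{T}_t - \mathcal{T}) Q^*_{\text{MDP}}}_\infty$, and rearranging gives $\norm{Q_t^* - Q^*_{\text{MDP}}}_\infty \le \frac{1}{1-\gamma}\norm{(\mathcal{T}_t - \mathcal{T}) Q^*_{\text{MDP}}}_\infty$. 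It then remains to bound the operator mismatch $\norm{(\mathcal{T}_t - \mathcal{T})Q^*_{\text{MDP}}}_\infty$ in terms of $\epsilon$ and $\delta$: the reward terms contribute at most $\delta$ (up to the convention about how $\mathcal{R}_t$ enters the expected immediate reward), while the transition-probability terms contribute $\gamma \norm{\mathcal{P}_t - \mathcal{P}}\,\norm{Q^*_{\text{MDP}}}_\infty$, and $\norm{Q^*_{\text{MDP}}}_\infty \le \frac{\norm{\mathcal{R}}_\infty}{1-\gamma}$ by summing the discounted reward series. Putting these together yields exactly $d(\epsilon,\delta) = \frac{\epsilon\gamma\norm{\mathcal{R}}_\infty}{(1-\gamma)^2} + \frac{\delta}{1-\gamma}$ as a bound on $\frac{1}{1-\gamma}\norm{(\mathcal{T}_t-\mathcal{T})Q^*_{\text{MDP}}}_\infty$, and one more factor related to how the $\limsup$ over the asymptotic regime interacts with the $Q_t$-iterates (as opposed to the $Q_t^*$ fixed points) accounts for the remaining constant; this is precisely where I would invoke the cited result of~\cite{changing} rather than re-derive it, since that paper supplies the bookkeeping that turns the per-step fixed-point bound into the $\kappa$-approximation statement with the factor $4$.

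The main obstacle I anticipate is not the contraction argument itself but matching conventions so that the constant comes out as $\frac{4 d(\epsilon,\delta)}{1-\gamma}$ exactly: the factor of $4$ (rather than $2$ or $1$) must come from the analysis in~\cite{changing}, which presumably handles both the gap between $Q_t$ and $Q_t^*$ and the gap between $Q_t^*$ and $Q^*_{\text{MDP}}$ while also accounting for the asymptotic nature of the $\epsilon,\delta$ bounds (they hold only as $\limsup$, so there is a transient during which the perturbations may be larger, requiring a Gronwall-type or telescoping argument to show the iterates still land within the claimed ball in the limit). I would therefore state the reduction to~\cite{changing}'s theorem cleanly — verifying that our stationary and time-varying tuples satisfy its hypotheses, namely that they are valid MDPs with common discount $\gamma$ and that the perturbation assumption gives its required input bounds — and let the cited theorem deliver the final constant, rather than attempting to reproduce its proof here.
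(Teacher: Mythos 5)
The paper gives no proof of this theorem at all: it is imported directly from the cited reference \cite{changing}, with the preceding sentence ("based on the theorem introduced in \cite{changing}...") serving as the entire justification. Your reconstruction is therefore more detailed than the paper's own treatment, and it is sound as far as it goes: the fixed-point perturbation bound $\norm{Q_t^*-Q^*_{\text{MDP}}}_\infty \leq \frac{1}{1-\gamma}\norm{(\mathcal{T}_t-\mathcal{T})Q^*_{\text{MDP}}}_\infty$ together with $\norm{(\mathcal{T}_t-\mathcal{T})Q^*_{\text{MDP}}}_\infty \leq \delta + \gamma\epsilon\norm{Q^*_{\text{MDP}}}_\infty$ and $\norm{Q^*_{\text{MDP}}}_\infty \leq \norm{\mathcal{R}}_\infty/(1-\gamma)$ does reproduce $d(\epsilon,\delta)$ exactly, and your decision to attribute the remaining factor $4/(1-\gamma)$ (which accounts for the learning iterates $Q_t$ tracking the drifting fixed points, not just the fixed-point gap) to the cited result is precisely what the paper itself does.
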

where $\epsilon$ and $\delta$ are the bounds defined in Definition \ref{k-approximate}, $\gamma$ is the discount factor defined in \eqref{maximization}, and $\kappa$ is the $\kappa$-approximate defined in Definition \ref{k-approximate}. Theorem \ref{upperbound} provides an upper bound on the error that might be obtained if the stationary model Q-values $Q^*_{\text{MDP}}$ is used.

To justify the importance of tracking the environment changes in our problem framework, the Chicago city data is used. The stationary model, $Q^*_{\text{MDP}}$, is obtained using May 2017 data in Sec. \ref{Sec:sim}. After adding the trip information of June $1^{\text{st}} 2017$, the new model parameters $\mathcal{P}_t$ and $\mathcal{R}_t$ are calculated, where we have $\epsilon=0.2, \delta=25.4, \gamma=0.8,$ and $\|R_\infty\|=128.6$. Then by using Theorem \ref{upperbound}, we have $d(\epsilon,\delta)=641.4$ and $\kappa=12828$, where it is easy to see that the upper bound is significantly large. Thus, using only the stationary model for all time $t$ is not enough, and we need to track the changes of the environment in our problem.

\end{appendices}

\end{document}